\documentclass[10pt]{article}
\textwidth= 5.00in
\textheight= 7.4in
\topmargin = 30pt
\evensidemargin=0pt
\oddsidemargin=55pt
\headsep=17pt
\parskip=.5pt
\parindent=12pt
\font\smallit=cmti10
\font\smalltt=cmtt10

\usepackage{amssymb,latexsym,amsmath,epsfig,amsthm} 
\usepackage{empheq}
\usepackage{ascmac}
\makeatletter

\renewcommand\section{\@startsection {section}{1}{\z@}
{-30pt \@plus -1ex \@minus -.2ex}
{2.3ex \@plus.2ex}
{\normalfont\normalsize\bfseries\boldmath}}

\renewcommand\subsection{\@startsection{subsection}{2}{\z@}
{-3.25ex\@plus -1ex \@minus -.2ex}
{1.5ex \@plus .2ex}
{\normalfont\normalsize\bfseries\boldmath}}

\renewcommand{\@seccntformat}[1]{\csname the#1\endcsname. }

\makeatother
\newtheorem{theorem}{Theorem}
\newtheorem{lemma}{Lemma}
\newtheorem{conjecture}{Conjecture}

\theoremstyle{definition}
\newtheorem{defn}{Definition}[section]
\newtheorem{rem}{Remark}[section]
\newtheorem{exam}{Example}[section]
\newtheorem{pict}{Figure}[section]

\begin{document}

\begin{center}
\uppercase{\bf Previous Player's Positions of Impartial Three-Dimensional Chocolate-Bar Games }
\vskip 20pt
{\bf Ryohei Miyadera }\\
{\smallit Keimei Gakuin Junior and High School, Kobe City, Japan}. \\
{\tt runnerskg@gmail.com}
\vskip 10pt
{\bf Hikaru Manabe}. \\
{\smallit Keimei Gakuin Junior and High School, Kobe City, Japan}. \\
{\tt urakihebanam@gmail.com}
\vskip 10pt
{\bf Shunsuke Nakamura}. \\
{\smallit Independent Researcher, Tokyo, Japan }. \\
{\tt nakamura.stat@gmail.com}

\end{center}
\vskip 20pt
\centerline{\smallit Received: , Revised: , Accepted: , Published: } 
\vskip 30pt

\pagestyle{myheadings} 
\markright{\smalltt INTEGERS: 19 (2019)\hfill} 
\thispagestyle{empty} 
\baselineskip=12.875pt 
\vskip 30pt

\centerline{\bf Abstract}
\noindent
In this study, we investigate three-dimensional chocolate bar games, which are variants of the game of Chomp. A three-dimensional chocolate bar is a three-dimensional array of cubes in which a bitter cubic box is present in some part of the bar. Two players take turns and cut the bar horizontally or vertically along the grooves. The player who manages to leave the opponent with a single bitter block is the winner.   
We consider the $\mathcal{P}$-positions of this game, where the $\mathcal{P}$-positions are positions of the game from which the previous player (the player who will play after the next player) can force a win, as long as they play correctly at every stage.
We present sufficient conditions for the case when the position $\{p,q,r\}$ is a $\mathcal{P}$-position if and only if $(p-1) \oplus (q-1) \oplus (r-1)$, where
$p, q$, and $r$ are the length, height, and width of the chocolate bar, respectively.
 \pagestyle{myheadings} 
 \markright{\smalltt \hfill} 
 \thispagestyle{empty} 
 \baselineskip=12.875pt 
 \vskip 30pt

\section{Introduction}\label{introductionsection}

Chocolate bar games are variants of the Chomp game presented in \cite{gale}.
A two-dimensional chocolate bar is a two-dimensional array of squares in which a bitter square printed in black is present in some part of the bar. See the chocolate bars in Figure \ref{two2dchoco}. 

A three-dimensional chocolate bar is a three-dimensional array of cubes in which a bitter cubic box printed in black is present in some parts of the bar. Figure \ref{two3dchoco} displays examples of three-dimensional chocolate bars. Games involving these chocolate bars may be defined as follows. 

\begin{defn}\label{definitionofchoco}
(i) Two-dimensional chocolate bar game: Each player in turn breaks the bar in a straight line along the grooves and eats the broken piece. The player who manages to leave the opponent with a single bitter block (black block) is the winner. \\
(ii) Three-dimensional chocolate game: The rules are the same as in (i), except that the chocolate is cut horizontally or vertically along the grooves. Examples of cutting three-dimensional chocolate bars are shown in Figure \ref{3dcut}.
\end{defn}

\begin{pict}
\begin{tabular}{cc}
\begin{minipage}{.33\textwidth}
\centering
\includegraphics[height=1.4cm]{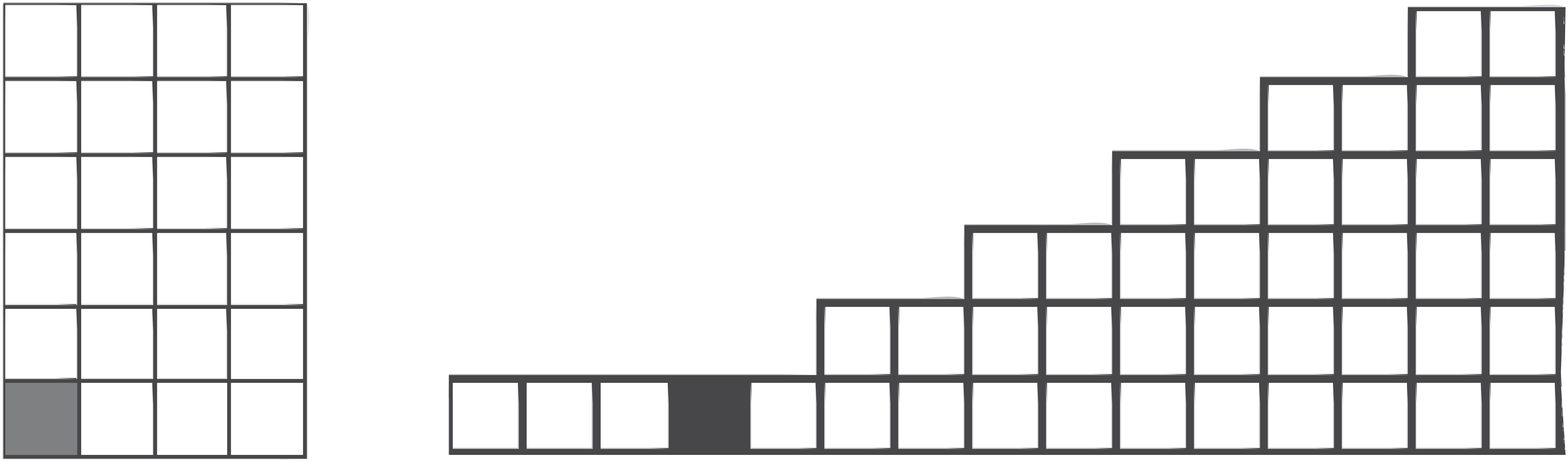}
\label{two2dchoco}
\end{minipage}
\begin{minipage}{.33\textwidth}
\centering
\includegraphics[height=1.8cm]{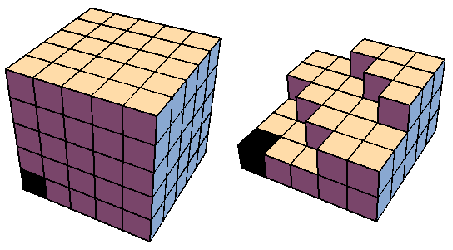}
\label{two3dchoco}
\end{minipage}
\end{tabular}
\end{pict}

\begin{exam}
Three methods of cutting a three-dimensional chocolate bar.\\
\begin{pict}
\includegraphics[height=2.7cm]{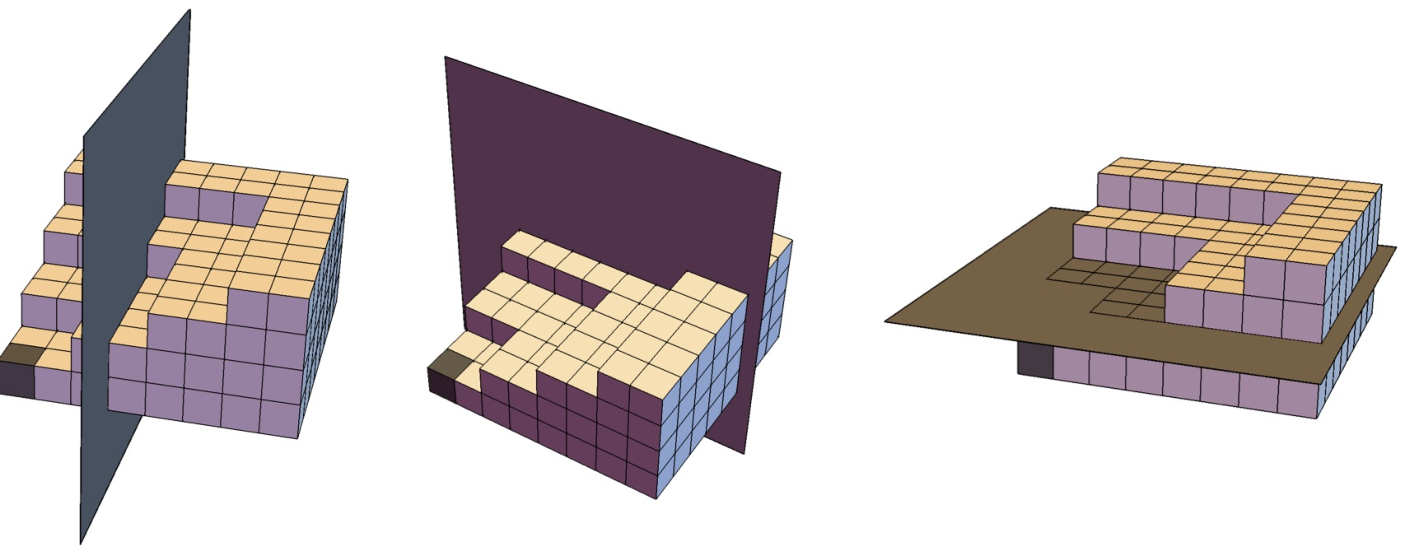}
\label{3dcut}
\end{pict}
\noindent
\end{exam}

For completeness, we briefly review some of the necessary concepts of combinatorial game theory; refer to \cite{lesson} for greater detail. Let $Z_{\ge 0}$ and $N$ be sets of non-negative integers and  natural numbers, respectively.

\begin{defn}\label{definitionfonimsum11}
	Let $x$ and $y$ be non-negative integers. Expressing them in Base 2 yields 
$x = \sum_{i=0}^n x_i 2^i$ and $y = \sum_{i=0}^n y_i 2^i$ with $x_i,y_i \in \{0,1\}$.
	We define nim-sum $x \oplus y$ as:
	\begin{equation}
		x \oplus y = \sum\limits_{i = 0}^n {{w_i}} {2^i}.
	\end{equation}
	where $w_{i}=x_{i}+y_{i} \ (\bmod\ 2)$.
\end{defn}

As chocolate bar games are impartial games without draws, only two outcome classes are possible.
\begin{defn}\label{NPpositions}
	$(a)$ A position is called a $\mathcal{P}$-\textit{position}, if it is a winning position for the previous player (the player who just moved), as long as he/she plays correctly at every stage.\\
	$(b)$ A position is called an $\mathcal{N}$-\textit{position}, if it is a winning position for the next player, as long as he/she plays correctly at every stage.
\end{defn}

\begin{defn}\label{defofmexgrundy2}
$(i)$ For any position $\mathbf{p}$ of game $\mathbf{G}$, there is a set of positions that can be reached by precisely one move in $\mathbf{G}$, which we denote as \textit{move}$(\mathbf{p})$. \\
$(ii)$ The \textit{minimum excluded value} ($\textit{mex}$) of a set $S$ of non-negative integers is the least non-negative integer that is not in S. \\
$(iii)$ Each position $\mathbf{p}$ of an impartial game has an associated Grundy number, and we denote this as $\mathcal{G}(\mathbf{p})$.\\
The Grundy number is recursively defined by $G(\mathbf{p}) = \textit{mex}\{G(\mathbf{h}): \mathbf{h} \in move(\mathbf{p})\}.$
\end{defn}

\begin{theorem}\label{theoremofsumg2}
For any position $\mathbf{p}$ of the game, 
	$\mathcal{G}(\mathbf{p}) =0$ if and only if $\mathbf{p}$ is a $\mathcal{P}$-position.
\end{theorem}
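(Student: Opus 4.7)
The plan is to prove the equivalence by strong induction on the game tree, exploiting the fact that chocolate bar games are finite impartial games: every play sequence strictly decreases the total number of cubes, so the ``move-from'' relation is well-founded and induction is legitimate.

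First I would handle the base case, namely positions $\mathbf{p}$ with $move(\mathbf{p}) = \emptyset$. By Definition \ref{defofmexgrundy2}(iii), $\mathcal{G}(\mathbf{p}) = \mathit{mex}(\emptyset) = 0$. On the other hand, a terminal position (a single bitter block) is one from which the next player cannot move, so the previous player has already won; by Definition \ref{NPpositions}(a) this is a $\mathcal{P}$-position. Hence both conditions hold simultaneously at the base.

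For the inductive step I would assume that the equivalence has been established for every $\mathbf{h} \in move(\mathbf{p})$ and argue both directions using the mex characterization. If $\mathcal{G}(\mathbf{p}) = 0$, then $0 \notin \{\mathcal{G}(\mathbf{h}) : \mathbf{h} \in move(\mathbf{p})\}$, so every follower $\mathbf{h}$ satisfies $\mathcal{G}(\mathbf{h}) > 0$; by the induction hypothesis every $\mathbf{h}$ is an $\mathcal{N}$-position, which means that whatever move the next player makes, the resulting position is winning for the player who is then about to move second, i.e.\ the original previous player; hence $\mathbf{p}$ is a $\mathcal{P}$-position. Conversely, if $\mathcal{G}(\mathbf{p}) \ne 0$, then $0 \in \{\mathcal{G}(\mathbf{h}) : \mathbf{h} \in move(\mathbf{p})\}$, so there exists $\mathbf{h}^{*} \in move(\mathbf{p})$ with $\mathcal{G}(\mathbf{h}^{*}) = 0$; by induction $\mathbf{h}^{*}$ is a $\mathcal{P}$-position, and the next player can move from $\mathbf{p}$ to $\mathbf{h}^{*}$, making $\mathbf{p}$ an $\mathcal{N}$-position.

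I do not anticipate any serious obstacle: the statement is essentially the standard consequence of the Sprague--Grundy formalism, and the only subtle point is justifying the induction, which in our setting is immediate because each cut strictly reduces the (finite) volume of the chocolate bar, so move sequences terminate. The rest is bookkeeping with the definition of mex and the $\mathcal{P}$/$\mathcal{N}$ dichotomy from Definition \ref{NPpositions}.
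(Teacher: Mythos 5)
Your proof is correct. The paper itself states Theorem \ref{theoremofsumg2} without proof, treating it as the standard Sprague--Grundy fact and deferring to the cited textbook \cite{lesson}; your induction over the well-founded move relation, with the $\textit{mex}(\emptyset)=0$ base case at terminal positions and the two-directional mex argument in the inductive step, is exactly the standard argument one would find there. The only point worth making explicit is that your step from ``$\mathcal{G}(\mathbf{h})>0$, hence $\mathbf{h}$ is not a $\mathcal{P}$-position'' to ``$\mathbf{h}$ is an $\mathcal{N}$-position'' relies on the $\mathcal{P}$/$\mathcal{N}$ dichotomy for finite impartial games without draws, which the paper asserts immediately before Definition \ref{NPpositions}; since that dichotomy is itself proved by the same well-founded induction, you could fold it into your argument, but invoking it as stated is legitimate here.
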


The original two-dimensional chocolate bar introduced by Robin \cite{robin} is the chocolate shown on the left-hand side in Figure \ref{two2dchoco}.
Because the horizontal and vertical grooves are independent, an $m \times n$ rectangular chocolate bar is equivalent to the game of Nim, which includes heaps of $m-1$ and $n-1$ stones, respectively. Therefore, the chocolate $6 \times 4$ bar game shown on the left-hand side of Figure \ref{two2dchoco} is mathematically the same as Nim, which includes heaps of $5$ and $3$ stones, respectively.
It is well known that the Grundy number of the Nim game with heaps of $m-1$ stones and $n-1$ stones is $(m-1) \oplus (n-1)$; therefore, the Grundy number of the $m \times n$ rectangular bar is $(m-1) \oplus (n-1)$.
Robin \cite{robin} also presented a cubic chocolate bar, as shown on the left-hand side of Figure \ref{two3dchoco}.
It can be easily determined that this $5 \times 5 \times 5$ three-dimensional chocolate bar is mathematically the same as Nim with heaps of $4$, $4$, and $4$ stones, and the Grundy number of this cuboid bar is $4 \oplus 4 \oplus 4$.
It is then natural to ask the following question.

\vspace{0.5cm}
\noindent
\bf{Question 1. \ }\normalfont
\textit{What is the necessary and sufficient condition whereby a three-dimensional chocolate bar may have a Grundy number $(p-1) \oplus (q-1) \oplus (r-1)$, where $p, q$, and $r$ are the length, height, and width of the bar, respectively?}
\normalfont
\vspace{0.5cm}

Although the authors answered this question for two-dimensional chocolate bars in \cite{jgame} and for the three-dimensional case in \cite{ns3d}, the results of these studies are omitted here.

When the Grundy number of a chocolate bar with $p, q$, and $r$ as the length, height, and width, respectively, is
$(p-1) \oplus (q-1) \oplus (r-1)$, the position is a $\mathcal{P}$-position if and only if 
$(p-1) \oplus (q-1) \oplus (r-1)=0$ for the chocolate bar.

Therefore, it is natural to ask the following question.

\vspace{0.5cm}
\noindent
\bf{Question 2. \ }\normalfont
\textit{Under what condition may a three dimensional chocolate bar with $p, q$, and $r$ as the length, height, and width, respectively, have a $\mathcal{P}$-position if and only if $(p-1) \oplus (q-1) \oplus (r-1)=0$?}
\normalfont
\vspace{0.5cm}

In the remainder of this paper, we present a sufficient condition for which Question 2 may be answered. 

Determining the necessary and sufficient conditions for this question is a very difficult unsolved problem considered by the authors.
We suppose that the difficulty of presenting the necessary and sufficient conditions arises from the fact that there are many kinds of sufficient conditions. For more information, see Theorems \ref{theoremforoddk} in Section \ref{sub4mone} and Conjecture \ref{theoremmanabe} in Section \ref{others}.

 We now define a three-dimensional chocolate bar. 

\begin{defn}\label{definitionoffunctionf3d}
	Suppose that $f(u,v)\in Z_{\geq0}$ for $u,v \in Z_{\geq0}$. $f$ is said to monotonically increase if $f(u,v) \leq f(x,z)$ for $x,z,u,v \in Z_{\geq0}$ with $u \leq x$ and $v \leq z$.
\end{defn} 

\begin{defn}\label{defofbarwithfunc3d}
	Let $f$ be the monotonically increasing function in Definition \ref{definitionoffunctionf3d}.\\ 
	Let $x,y,z \in Z_{\geq0}$.
	The three-dimensional chocolate bar comprises a set of $1 \times 1 \times 1$ boxes. 
For $u,w \in Z_{\geq0}$ such that $u \leq x$ and $w \leq z$, the height of the column at position $(u,w)$ is $ \min (f(u,w),y) +1$. 
There is a bitter box at position $(0,0)$.
We denote this chocolate bar as $CB(f,x,y,z)$. Note that $x+1, y+1$, and $z+1$ are the length, height, and width of the bar, respectively.
\end{defn}

\begin{exam}
Here, we let $f(x,z)$ $= \lfloor \frac{x+z}{3}\rfloor$, where $ \lfloor \ \rfloor$ is the floor function, and we present several examples of  $CB(f,x,y,z)$.
\begin{pict}
	\centering
\includegraphics[height=3cm]{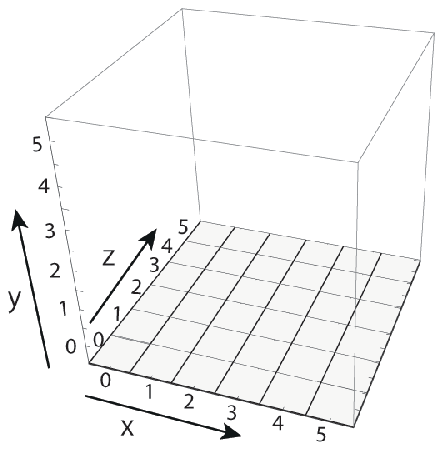}
\label{coordinate3d}
\end{pict}

\begin{pict}
\begin{tabular}{cc}
\begin{minipage}{.33\textwidth}
\centering
\includegraphics[height=2.5cm]{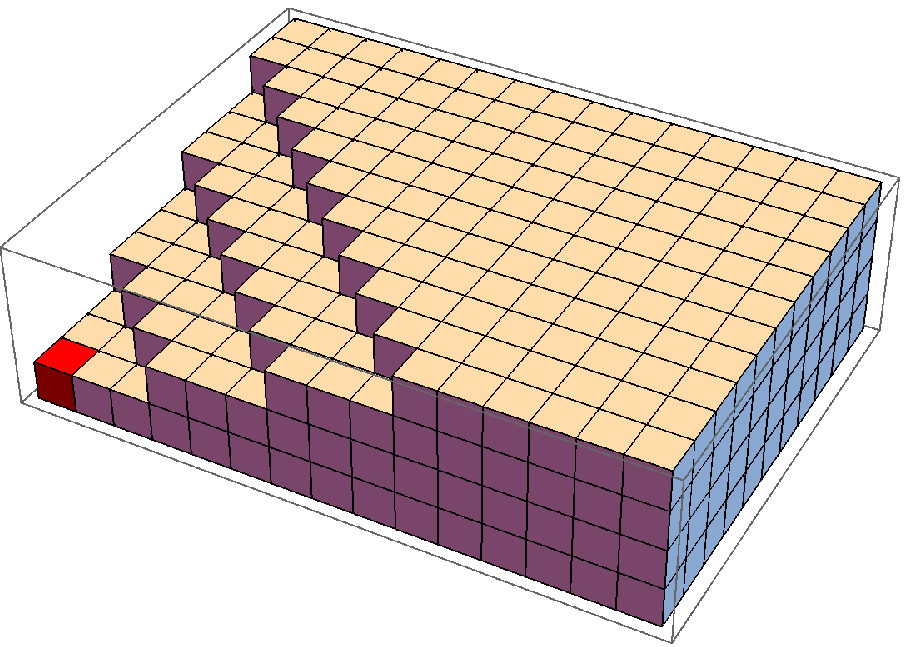}

$CB(f,14,3,10)$
\label{f14310}
\end{minipage}
\begin{minipage}{.33\textwidth}
\centering
\includegraphics[height=2.5cm]{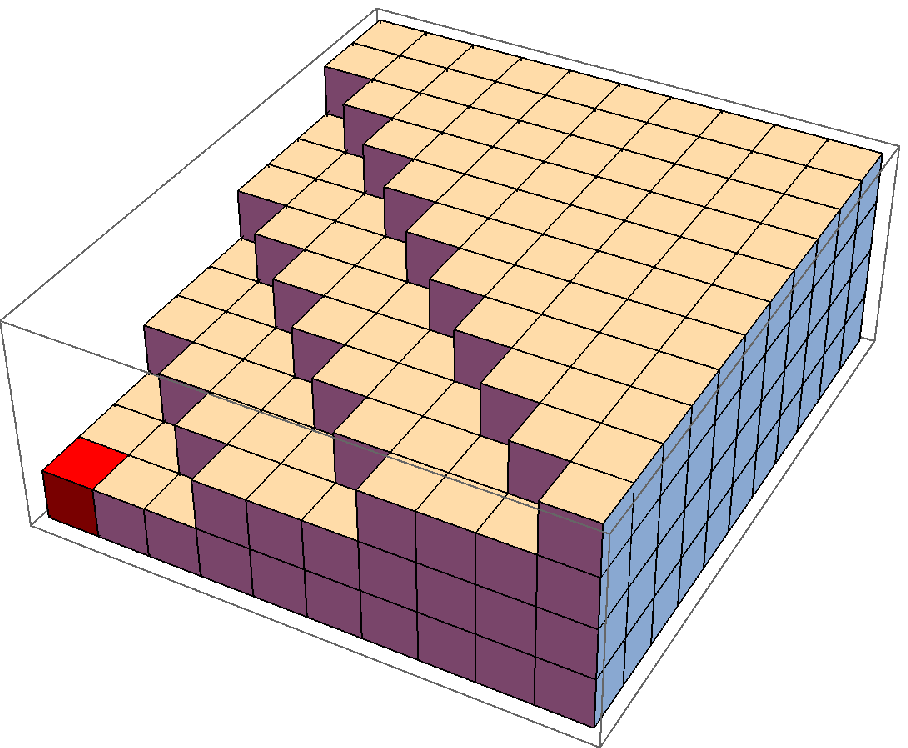}

$CB(f,9,3,10)$
\label{f9310}
\end{minipage}	
\end{tabular}
\end{pict}

\begin{pict}
\begin{tabular}{cc}
\begin{minipage}{.33\textwidth}
	\centering
\includegraphics[height=3cm]{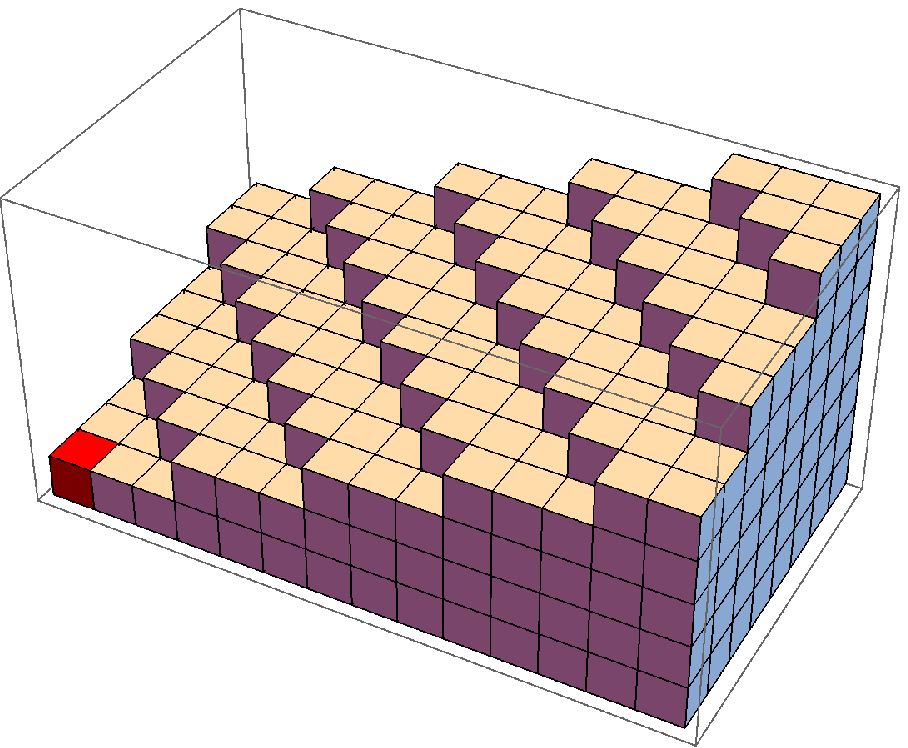}

$CB(f,13,6,7)$
\label{f1367}
\end{minipage}
\begin{minipage}{.33\textwidth}
		\centering
\includegraphics[height=2.2cm]{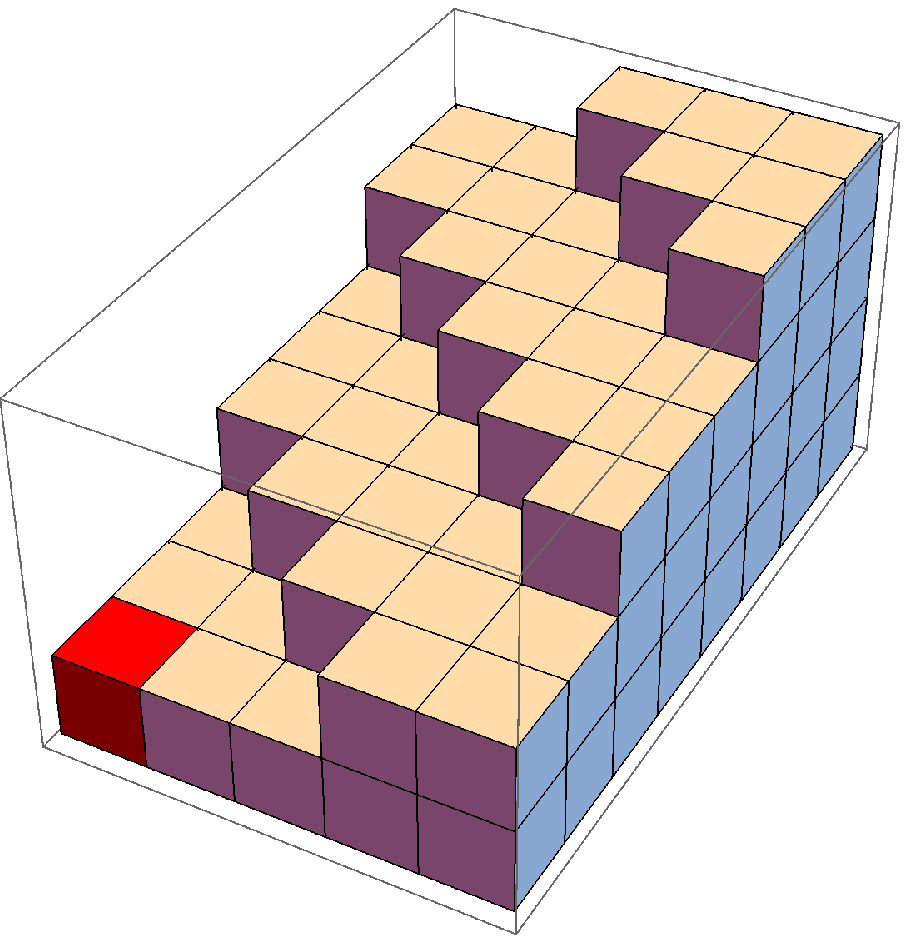}

$CB(f,4,3,7)$ 
\label{f437}
\end{minipage}	
\end{tabular}
\end{pict}
\end{exam}

Next, we define $move_f(\{x, y, z\})$ in Definition \ref{movefor3dimension}. $move_f(\{x, y, z\})$ is a set that contains all of the positions that can be reached from position $\{x, y, z\}$ in one step (directly).

\begin{defn}\label{movefor3dimension}
	For $x,y,z \in Z_{\ge 0}$, we define 
\begin{align}
& move_f(\{x,y,z\})=\{\{u,\min(f(u,z),y),z \}:u<x \} \cup \{\{x,v,z \}:v<y \}   \nonumber \\
& \cup \{ \{x,\min(y, f(x,w) ),w \}:w<z \}, \text{ where \ } u,v,w \in Z_{\ge 0}.\nonumber
\end{align}	
\end{defn}

\begin{rem}
Definition \ref{movefor3dimension} shows how to reduce the coordinates of the chocolate bar by cutting, and in Example \ref{chococute}, we provide concrete examples of reducing the coordinates.
\end{rem}

\section{When $f(x,z) = \lfloor \frac{x+z}{k}\rfloor$ for $k =4m+3$}\label{sub4mone}
Let $k = 4m + 3$ for some $m \in Z_{\geq 0}$.
Let $x = \sum_{i=0}^n x_i 2^i$, $y = \sum_{i=0}^n y_i 2^i$ and $z = \sum_{i=0}^n z_i 2^i$ for some $n \in Z_{\ge 0}$ and $x_i,y_i,z_i \in \{0,1\}$.
Throughout this section, we assume that
\begin{equation}
f(x,z) = \lfloor \frac{x+z}{k}\rfloor.
\end{equation}	
Before we prove several lemmas, we first consider the procedures provided in Example \ref{chococute}. Although this example is lengthy, the proofs of the lemmas are difficult to understand without first considering this example.

\begin{exam}\label{chococute}
Let $f(x,z)$ $= \lfloor \frac{x+z}{3}\rfloor$. \\
$(i)$ We begin with the chocolate bar shown in Figure \ref{f14310}. If the first coordinate $x = 14$ is reduced to $u=9$ by cutting the 
chocolate bar $CB(f,14,3,10)$ shown in Figure \ref{f14310}, by Definition \ref{movefor3dimension} the second coordinate will be $\min(f(u,z),y)$
$= \min(f(9,10),3)$ $= \min(\lfloor \frac{19}{3}\rfloor,3)$ $= \min(6,3)$ = 3. Therefore,
we can reduce $x = 14$ to $u=9$ without affecting the second coordinate $3$, which is the height of the chocolate bar, and 
we obtain the chocolate bar $CB(f,9,3,10)$ shown in Figure \ref{f9310} (i.e., $\{9,3,10\} \in move_f(\{14,3,10\})$). 

\begin{table}
\begin{tabular}{cc}
\begin{minipage}{.5\textwidth}
	\centering
\begin{tabular}{|c|c|c|c|} \hline
\text{  \ } & $x=14$ & $y=3$ & $z=10$ \\ \hline
$2^3=8$  & $x_3=1$ & $y_3=0$ & $z_3=1$ \\ \hline
$2^2=4$ & $x_2=1$ & $y_2=0$ & $z_2=0$ \\ \hline
$2^1=2$ & $x_1=1$ & $y_1=1$ & $z_1=1$ \\ \hline
$2^0=1$ & $x_0=0$ & $y_0=1$ & $z_0=0$ \\ \hline
\end{tabular}
 \caption{$CB(f,14,3,10)$ }\label{3D1981}
\end{minipage}
\begin{minipage}{.5\textwidth}
\centering
\begin{tabular}{|c|c|c|c|} \hline
 \text{  \ } & $u=9$ & $y=3$ & $z=10$ \\ \hline
$2^3=8$  & $u_3=1$ & $y_3=0$ & $z_3=1$ \\ \hline
$2^2=4$ & $u_2=0$ & $y_2=0$ & $z_2=0$ \\ \hline
$2^1=2$ & $u_1=0$ & $y_1=1$ & $z_1=1$ \\ \hline
$2^0=1$ & $u_0=1$ & $y_0= 1$ &$z_0=0$ \\ \hline
\end{tabular}
  \caption{$CB(9,3,10)$ }\label{3D19}
\end{minipage}	
\end{tabular}
\end{table}
\noindent
$(ii)$ We begin with the chocolate bar in Figure \ref{f1367}. If the first coordinate $x = 13$ is reduced to $u=4$ by cutting the chocolate bar $CB(f,13,6,7)$ in Figure \ref{f1367} by Definition \ref{movefor3dimension}, the second coordinate will be $\min(f(u,z),y)$
$= \min(f(4,7),6)$ $= \min(\lfloor \frac{11}{3}\rfloor,6) =\min(3,6) =3$. Therefore, the second coordinate, $6$, which is the height of the chocolate bar, will be reduced to $3$. Then, we obtain the chocolate bar shown in Figure \ref{f437} (i.e., $\{4,3,7\} \in move_f(\{13,6,7\})$). 
\begin{table}
\begin{tabular}{cc}
\begin{minipage}{.5\textwidth}
\centering
\begin{tabular}{|c|c|c|c|} \hline
 \text{  \ } & $x=13$ & $y=6$ & $z=7$ \\ \hline
$2^3=8$ & $x_3=1$ & $y_3=0$ & $z_3=0$ \\ \hline
$2^2=4$ & $x_2=1$ & $y_2=1$ & $z_2=1$ \\ \hline
$2^1=2$ & $x_1=0$ & $y_1=1$ & $z_1=1$ \\ \hline
$2^0=1$ & $x_0=1$ & $y_0=0$ & $z_0=1$ \\ \hline
\end{tabular}
 \caption{$CB(f,13,6,7)$ }\label{3D1982}
\end{minipage}
\begin{minipage}{.5\textwidth}
		\centering
\begin{tabular}{|c|c|c|c|} \hline
 \text{  \ } & $x=4$ & $y=3$ & $z=7$ \\ \hline
$2^3=8$ & $u_3=0$ & $v_3=0$ & $z_3=0$ \\ \hline
$2^2=4$ & $u_2=1$ & $v_2=0$ & $z_2=1$ \\ \hline
$2^1=2$ & $u_1=0$ & $v_1=1$ & $z_1=1$ \\ \hline
$2^0=1$ & $u_0=0$ & $v_0=1$ & $z_0=1$ \\ \hline
\end{tabular}
  \caption{$CB(f,4,3,7)$ }\label{3D19b}
\end{minipage}	
\end{tabular}
\end{table}
\noindent
$(iii)$ The procedures presented in $(i)$ and $(ii)$ are good examples of moving to a position whose nim-sum is 0 from a position whose nim-sum is not 0.

In $(i)$, $14 \oplus 3 \oplus 10 = 7$, and suppose that the player wants to move to a position whose nim-sum is 0. First, let $u_3= x_3 = 1$. Next, 
reduce $x_2 =1$ to $u_2 = 0$. Note that 
\begin{equation}\label{xbigeru0}
x = \sum_{i=0}^3 x_i 2^i =  2^3 +  2^2 +  2 >  2^3 + 0 \times 2^2 + u_1 \times 2 + u_0 = \sum_{i=0}^3 u_i 2^i =u
\end{equation}
regardless of the values of $u_1, u_0$. Then, reduce $x_1$ to $u_1=0$ and increase $x_0 = 0$ to $u_0 =1$. Note that by considering (\ref{xbigeru0}), one can choose any value for $u_1, u_0$. Then, we obtain the position $\{9,3,10\}$ such that $9 \oplus 3 \oplus 10 = 0$.

In $(ii)$, $13 \oplus 6 \oplus 7 = 12$, and suppose that it is desired to move to a position whose nim-sum is 0. First, we have to reduce $x_3=1$ to $u_3=0$. 
Because $\{x_2,y_2,z_2\} = \{1,1,1\}$ and $1 \oplus 1 \oplus 1 \ne 0 \ (\mod 2)$, we may let 
$\{u_2,y_2,z_2\} = \{0,1,1\}$ or $\{u_2,v_2,z_2\} = \{1,0,1\}$ by reducing $y$ to $v$. Note that once we reduce $x$, we cannot reduce $z$.

If 
\begin{equation}\label{firstcase}
\{u_2,y_2,z_2\} = \{0,1,1\},
\end{equation}
we have
\begin{align}
& f( u, z )\\ \nonumber
& = f( \sum_{i=0}^3 u_i 2^i, \sum_{i=0}^3 z_i 2^i )\\ \nonumber
& =  f(0 \times 2^3 + 0 \times 2^2 + u_1 2^1 + u_0 2^0, 7) \\ \nonumber
& =   \lfloor \frac{7+u_1 2^1 + u_0 2^0}{3}\rfloor  \leq \lfloor \frac{10}{3}\rfloor=3, \label{xbigeru}
\end{align}
regardless of the values of $u_1, u_0$. We then have $f(u,z) < 4 = y_2 2^2 \leq y$.
Therefore, by Definition \ref{movefor3dimension} (\ref{firstcase}) leads to a contradiction.

We should then let 

\begin{equation}\label{secondcase}
\{u_2,v_2,z_2\} = \{1,0,1\},
\end{equation}
by simultaneously reducing $x$ and $y$.

Next, we let $\{u_1,v_1,z_1\} = \{0,1,1\}$ or $\{u_1,v_1,z_1\} = \{1,0,1\}$.

If $\{u_1,v_1,z_1\} = \{1,0,1\}$, by (\ref{secondcase})
\begin{align}
& f(u, z)\\ \nonumber
& = f( \sum_{i=0}^3 u_i 2^i, \sum_{i=0}^3 z_i 2^i )\\ \nonumber
& =  f( 0 \times 2^3 + 2^2 +  2^1 + u_0 2^0, 7) \\ \nonumber
& =   \lfloor \frac{13 + u_0 2^0}{3}\rfloor  \geq \lfloor \frac{13}{3}\rfloor \\ \nonumber
& =4 > 1 \geq \sum_{i=0}^3 v_i 2^i = 0 \times 2^3 + 0 \times 2^2 + 0 \times 2 + v_0 =v, \label{xbigeru}
\end{align}
and we obtain 
\begin{equation}\label{ysmallth}
f(u,z) > v.
\end{equation}
When we reduce $x$ to $u$ and $y$ to $v$, by definition 
\ref{movefor3dimension} we have
\begin{equation}
v = \min(f(u,z),y), \nonumber
\end{equation}
and this contradicts (\ref{ysmallth}).

Therefore, let $\{u_1,v_1,z_1\} = \{0,1,1\}$. Using similar reasoning, we let $\{u_0,v_0,z_0\} = \{0,1,1\}$.

We then obtain the position $\{4,3,7\}$ such that $4 \oplus 3 \oplus 7 = 0$ and 
$v = 3 = \lfloor \frac{4+7}{3}\rfloor = f(u,z).$
\end{exam}

We define
\begin{equation}
S_t = \sum_{i=n-t}^n (x_i + z_i - ky_i) 2^i, \label{defofs},
\end{equation}
for $t = 0,1, \cdots, n$.

	
\begin{lemma}\label{lemmaforf} 
We have the following relationships between $f(x,z)$ and $S_n$.\\
$(a)$ 
	\begin{equation}
y = f(x,z) \nonumber 
	\end{equation}
if and only if $0 \leq S_n < k$.\\
$(b)$ 
	\begin{equation}
y > f(x,z) \nonumber 
	\end{equation}
if and only if $S_n < 0$.\\
$(c)$ 
	\begin{equation}
y < f(x,z) \nonumber 
	\end{equation}
if and only if $S_n \geq k$.
\end{lemma}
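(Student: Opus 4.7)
The plan is to reduce the lemma to the standard inequality characterization of the floor function. The first step is to observe that the definition of $S_n$ collapses by linearity: since $x = \sum_{i=0}^n x_i 2^i$, $y = \sum_{i=0}^n y_i 2^i$, $z = \sum_{i=0}^n z_i 2^i$, we have
\begin{equation}
S_n = \sum_{i=0}^n (x_i + z_i - ky_i)2^i = x + z - ky. \nonumber
\end{equation}
Once this is in hand, the three cases of the lemma become purely arithmetic statements about $\lfloor (x+z)/k \rfloor$.

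For part (a), I would rewrite $0 \le S_n < k$ as $ky \le x+z < k(y+1)$, divide through by $k>0$ to obtain $y \le (x+z)/k < y+1$, and conclude by the definition of the floor function that $y = \lfloor (x+z)/k \rfloor = f(x,z)$. All of these steps are reversible, giving the ``if and only if''.

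For part (b), the direction $S_n < 0 \Rightarrow y > f(x,z)$ is immediate: $x+z < ky$ gives $(x+z)/k < y$, hence $f(x,z) = \lfloor (x+z)/k \rfloor \le (x+z)/k < y$. Conversely, if $y > f(x,z)$, then since $y$ is an integer, $y \ge \lfloor (x+z)/k \rfloor + 1$, so $ky \ge k\lfloor (x+z)/k \rfloor + k$. The standard bound $k\lfloor (x+z)/k \rfloor \ge x+z-(k-1)$ then yields $ky \ge x+z+1 > x+z$, i.e.\ $S_n < 0$. Part (c) is handled by the mirror-image argument: $S_n \ge k$ rewrites as $x+z \ge k(y+1)$, hence $(x+z)/k \ge y+1$, so $\lfloor (x+z)/k \rfloor \ge y+1 > y$; conversely $y < f(x,z)$ gives $y+1 \le \lfloor (x+z)/k \rfloor \le (x+z)/k$, yielding $x+z \ge k(y+1)$, i.e.\ $S_n \ge k$.

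There is no real obstacle here; the only subtlety worth highlighting is the strict-vs-nonstrict integer bookkeeping in parts (b) and (c), which is why I would isolate the use of ``$y$ is an integer'' explicitly at each conversion between $(x+z)/k < y$ and $\lfloor (x+z)/k \rfloor < y$. The lemma is essentially an expository restatement of the floor definition, and its real purpose is to set up the bit-by-bit analysis via the partial sums $S_t$ that will appear in later lemmas.
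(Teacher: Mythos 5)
Your proposal is correct and follows essentially the same route as the paper: both reduce to the identity $S_n = x+z-ky$ and then apply the defining inequality of the floor function (the paper derives (c) from (a) and (b) rather than by a direct mirror argument, but this is an immaterial difference). No issues.
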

\begin{proof}
First, note that $S_n = \sum_{i=0}^n (x_i + z_i - ky_i) 2^i = x + z - ky$.\\
$(a)$
\begin{equation}
y = f(x,z) = \lfloor \frac{x+z}{k}\rfloor \nonumber 
\end{equation}
if and only if $y \leq \frac{x+z}{k} < y+1$ if and only if $0 \leq S_{n}= x+z-ky < k$.\\
$(b)$
\begin{equation}
y > f(x,z) = \lfloor \frac{x+z}{k}\rfloor \nonumber 
\end{equation}
if and only if $\frac{x+z}{k} < y$, which occurs if and only if $ S_{n}= x+z-ky < 0$.\\
We then obtain $(c)$ via $(a)$ and $(b)$.
\end{proof}

\begin{lemma}\label{lemma01}
Let $t \in Z_{\geq 0}$.
Suppose that for $i = n,n-1, \cdots, n-t$
	\begin{equation}
x_i \oplus y_i \oplus z_i=0. \label{oplus01}
	\end{equation}
There then exists an even number $a$ such that 
	\begin{equation}
S_t = a 2^{n-t}.\label{evenconditiont}
	\end{equation}	
\end{lemma}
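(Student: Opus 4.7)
The plan is to rewrite the conclusion $S_t = a \cdot 2^{n-t}$ with $a$ even as the divisibility statement $2^{n-t+1} \mid S_t$, and to prove this by showing that \emph{every} coefficient $(x_i + z_i - k y_i)$ appearing in the sum (\ref{defofs}) that defines $S_t$ is individually an even integer. Once this is established, each term in the sum carries a factor of $2 \cdot 2^i$, the smallest such term contributes $2 \cdot 2^{n-t} = 2^{n-t+1}$, and factoring this out from the whole sum yields the required divisibility.

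The key step is the parity computation of $(x_i + z_i - k y_i)$ for $n-t \leq i \leq n$. I would argue as follows. The hypothesis $x_i \oplus y_i \oplus z_i = 0$, combined with $x_i, y_i, z_i \in \{0,1\}$, is equivalent to $x_i + y_i + z_i \equiv 0 \pmod 2$. Because $k = 4m+3$ is odd, we have $k \equiv 1 \pmod 2$, and hence
\begin{equation}
x_i + z_i - k y_i \equiv x_i + z_i - y_i \equiv x_i + z_i + y_i \equiv 0 \pmod 2.
\end{equation}
So for each such $i$ we may write $x_i + z_i - k y_i = 2 c_i$ with $c_i \in \mathbb{Z}$.

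To finish, I would substitute into (\ref{defofs}) to obtain
\begin{equation}
S_t \;=\; \sum_{i=n-t}^{n} 2 c_i \cdot 2^i \;=\; 2^{n-t+1} \sum_{i=n-t}^{n} c_i \cdot 2^{\,i - (n-t)},
\end{equation}
which is manifestly $2^{n-t+1}$ times an integer. Setting $a = S_t / 2^{n-t}$ then gives $a = 2 \sum_{i=n-t}^{n} c_i \cdot 2^{\,i - (n-t)}$, which is an even integer, as required.

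I do not expect a serious obstacle here: the content of the lemma is really just bookkeeping about binary expansions together with the parity of $k$. The only subtlety worth flagging is that the argument uses only $k$ odd, not the stronger hypothesis $k = 4m+3$; presumably the finer $\bmod\ 4$ information about $k$ will be needed in later lemmas, while at this stage the bound $a$ being even is already enough to proceed.
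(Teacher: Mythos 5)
Your proof is correct and follows essentially the same route as the paper: the paper's (much terser) argument likewise observes that since $k$ is odd, the hypothesis $x_i \oplus y_i \oplus z_i = 0$ makes each coefficient $x_i + z_i - k y_i$ even, and concludes the factorization directly. Your added remark that only the oddness of $k$ is used here is accurate and consistent with the paper's own wording.
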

\begin{proof}
Because $k$ is odd, by (\ref{oplus01}) $x_{i} + z_{i}-k y_{i}$ is even for $i = n, n-1, \cdots, n-t$, and therefore
we have (\ref{evenconditiont}).
\end{proof}

\begin{lemma}\label{lemma1}
Let $t \in Z_{\geq 0}$.
Suppose that for $i = n,n-1, \cdots, n-t$
	\begin{equation}
x_i \oplus y_i \oplus z_i=0 \label{oplus1}
	\end{equation}
and 
	\begin{equation}
S_t < 0.\label{negativeconditiont}
	\end{equation}	
Then, for any natural number $j$ such that $j > t$,

\begin{equation}
S_j < 0.\nonumber
\end{equation}
\end{lemma}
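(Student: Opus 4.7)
The plan is to reduce $S_j$ to $S_t$ plus a controlled ``tail'' and show that, under hypothesis (\ref{negativeconditiont}), the tail cannot swing $S_j$ back up to zero.

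First I would invoke Lemma \ref{lemma01}: since $x_i \oplus y_i \oplus z_i = 0$ for $i = n, n-1, \ldots, n-t$, we have $S_t = a\,2^{n-t}$ for some even integer $a$. Combined with $S_t < 0$, evenness of $a$ forces $a \leq -2$, and hence
\begin{equation}
S_t \leq -2^{n-t+1}. \nonumber
\end{equation}
This is the quantitative strengthening of (\ref{negativeconditiont}) that makes the argument go through; without the parity information from Lemma \ref{lemma01}, we could only conclude $S_t \leq -1$, which would not suffice.

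Next I would split the defining sum: for any $j > t$,
\begin{equation}
S_j = S_t + \sum_{i=n-j}^{n-t-1} (x_i + z_i - k y_i)\,2^i. \nonumber
\end{equation}
Since $x_i, z_i \in \{0,1\}$ and $k y_i \geq 0$, each summand satisfies $x_i + z_i - k y_i \leq 2$. Therefore the tail is bounded above by
\begin{equation}
\sum_{i=n-j}^{n-t-1} 2 \cdot 2^i \;\leq\; 2\sum_{i=0}^{n-t-1} 2^i \;=\; 2^{n-t+1} - 2. \nonumber
\end{equation}

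Combining the two estimates yields
\begin{equation}
S_j \;\leq\; -2^{n-t+1} + \bigl(2^{n-t+1} - 2\bigr) \;=\; -2 \;<\; 0, \nonumber
\end{equation}
which is the desired conclusion. There is no real obstacle here: the only subtle point is recognizing that Lemma \ref{lemma01} must be used to upgrade ``$S_t < 0$'' to ``$S_t \leq -2^{n-t+1}$,'' which is precisely the amount of slack needed to absorb the worst-case positive contribution $2^{n-t+1} - 2$ from the lower-order bits.
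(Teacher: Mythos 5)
Your proof is correct and follows essentially the same route as the paper's: both use Lemma \ref{lemma01} to upgrade $S_t<0$ to $S_t\le -2\cdot 2^{n-t}$, bound each tail term by $2\cdot 2^i$, and conclude $S_j\le -2<0$. No differences worth noting.
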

\begin{proof}
By Lemma \ref{lemma01}, (\ref{oplus1}),
and (\ref{negativeconditiont}),
\begin{equation}
S_t = a2^{n-t} \label{evencondition}
\end{equation}
for some even number $a$ such that $a \leq -2$.
Then, by (\ref{evencondition}), for any natural number $j$ such that $j > t$ the following holds:
\begin{align}
S_j = & S_t +   \sum_{i=n-j}^{n-t-1} (x_i + z_i - ky_i) 2^i  \nonumber \\
\leq & S_t + 2 \times \sum_{i=0}^{n-t-1} 2^i  \nonumber \\
\leq & (-2)2^{n-t} + 2 \times (2^{n-t} -1)= -2 < 0. \nonumber
\end{align}
\end{proof}

\begin{lemma}\label{lemma1b}
Let $t \in Z_{\geq 0}$.
Suppose that for $i = 0,1, \cdots, n-t$
	\begin{equation}
x_i \oplus y_i \oplus z_i=0 \label{oplus1b}
	\end{equation}
and 
	\begin{equation}
y \leq f(x,z).\label{ysmallerthanxz}
	\end{equation}
Then, 

	\begin{equation}
S_t \geq 0. \label{bigthan0}
	\end{equation}
	\end{lemma}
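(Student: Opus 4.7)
The plan is to argue by contradiction, in exactly the mirror fashion of the proof of Lemma \ref{lemma1}. By Lemma \ref{lemmaforf}(b), the hypothesis $y \leq f(x,z)$ is equivalent to $S_n \geq 0$, so it suffices to assume $S_t < 0$ and derive $S_n < 0$.

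The first key observation is that $S_t$ must be divisible by a large power of two. Since $x_{n-t} \oplus y_{n-t} \oplus z_{n-t} = 0$ and $k$ is odd, the integer $x_{n-t} + z_{n-t} - k y_{n-t}$ is even (this is exactly the reasoning used in the proof of Lemma \ref{lemma01}), so the term $(x_{n-t} + z_{n-t} - k y_{n-t}) 2^{n-t}$ is divisible by $2^{n-t+1}$. All remaining terms of $S_t = \sum_{i=n-t}^{n}(x_i + z_i - k y_i) 2^i$ have index $i > n-t$ and are therefore automatically multiples of $2^{n-t+1}$. Hence $S_t$ itself is a multiple of $2^{n-t+1}$, and under the contradiction hypothesis $S_t < 0$ I would conclude $S_t \leq -2^{n-t+1}$.

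The second step is a crude upper bound on the complementary sum
\begin{equation*}
L := S_n - S_t = \sum_{i=0}^{n-t-1}(x_i + z_i - k y_i)\,2^i.
\end{equation*}
Because $x_i + z_i - k y_i \leq 2$ holds for any choice of bits (the maximum being attained only when $y_i = 0$ and $x_i = z_i = 1$), I immediately get $L \leq 2(2^{n-t}-1) = 2^{n-t+1} - 2$. Combining the two bounds yields $S_n = S_t + L \leq -2^{n-t+1} + 2^{n-t+1} - 2 = -2 < 0$, contradicting $S_n \geq 0$.

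I do not anticipate a genuine obstacle: the structure is a direct dual of Lemma \ref{lemma1}, with the divisibility ensuring that a negative $S_t$ must be "very negative", while the low-bit contributions are too small to bridge the gap. A minor point worth flagging in the write-up is that the hypothesis $x_i \oplus y_i \oplus z_i = 0$ is only needed at the single index $i = n-t$ (to get the divisibility), and that the bound $x_i + z_i - k y_i \leq 2$ on the lower indices requires no nim-sum assumption at all; so the stated hypothesis is slightly stronger than what the argument actually consumes, but I would present it as stated in order to match the applications of the lemma later in the paper.
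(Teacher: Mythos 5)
Your proof is correct and follows essentially the same route as the paper, which also argues by contradiction: assuming $S_t<0$, the paper invokes Lemma \ref{lemma1} (whose proof is exactly your divisibility-plus-crude-upper-bound computation) to conclude $S_n<0$, and then uses Lemma \ref{lemmaforf}(b) to contradict $y\le f(x,z)$. Your side remark that only the parity at index $n-t$ is consumed is accurate, and your inlined version is in fact slightly tidier than the paper's, since the paper cites Lemma \ref{lemma1} whose stated hypothesis (the nim-sum condition at $i=n,\dots,n-t$) is not literally implied by the hypothesis of the present lemma (stated for $i=0,\dots,n-t$); the two ranges overlap only at $i=n-t$, which, as you observe, is all the argument actually needs.
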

\begin{proof}
If
	\begin{equation}
S_t < 0, \label{smalthan0}
	\end{equation}
by (\ref{oplus1b}) and Lemma \ref{lemma1}, we have 
	\begin{equation}
S_n< 0. \nonumber
	\end{equation}
Then by $(b)$ of Lemma \ref{lemmaforf}, we have 
	\begin{equation}
y > f(x,z), \nonumber
	\end{equation}
and this contracts (\ref{ysmallerthanxz}). Therefore, (\ref{smalthan0}) is not true, and we have
(\ref{bigthan0}).
\end{proof}

\begin{lemma}\label{lemma2}
Let $t \in Z_{\geq 0}$.
If
	\begin{equation}
S_t \geq k2^{n-t},\label{greaterthank}
	\end{equation}	
then, for any natural number $j$ such that $j > t$,
\begin{equation}
S_j  \geq  k2^{n-j}. \label{greaterthanj}
\end{equation}
\end{lemma}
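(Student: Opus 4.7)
The plan is to bound $S_j - S_t$ below by exploiting the worst case digit contribution, and then combine with the hypothesis to get the desired lower bound on $S_j$.

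First, I would write
\begin{equation}
S_j - S_t = \sum_{i=n-j}^{n-t-1} (x_i + z_i - k y_i)\, 2^i. \nonumber
\end{equation}
For each index $i$, the term $x_i + z_i - k y_i$ attains its minimum when $x_i = z_i = 0$ and $y_i = 1$, giving the value $-k$. Consequently
\begin{equation}
S_j - S_t \geq -k \sum_{i=n-j}^{n-t-1} 2^i = -k\bigl(2^{n-t} - 2^{n-j}\bigr). \nonumber
\end{equation}

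Next, I would combine this with the hypothesis $S_t \geq k 2^{n-t}$ to conclude
\begin{equation}
S_j \geq k 2^{n-t} - k\bigl(2^{n-t} - 2^{n-j}\bigr) = k 2^{n-j}, \nonumber
\end{equation}
which is exactly the desired inequality~(\ref{greaterthanj}).

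There is no real obstacle here; the statement is essentially a stability result saying that once the truncated sum is at least $k$ times the place value at the boundary, the remaining digits cannot subtract enough to destroy the inequality. The only thing worth double-checking is the lower bound $x_i + z_i - k y_i \geq -k$, which is immediate because $x_i, z_i \in \{0,1\}$ and $y_i \in \{0,1\}$, so the smallest value occurs at $(0,0,1)$. No induction is required since the bound on $S_j - S_t$ handles all $j > t$ uniformly.
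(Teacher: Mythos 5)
Your proof is correct and follows essentially the same route as the paper: both bound each coefficient $x_i+z_i-ky_i$ below by $-k$ (the worst case $x_i=z_i=0$, $y_i=1$), sum the geometric series $\sum_{i=n-j}^{n-t-1}2^i = 2^{n-t}-2^{n-j}$, and combine with the hypothesis $S_t\geq k2^{n-t}$. No issues.
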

\begin{proof}
$S_j$ will be smallest when $\{x_i,y_i,z_i\}$ = $\{0,1,0\}$ for $i = n-t-1,n-t-2, \cdots, n-j.$
Therefore, it is sufficient to prove (\ref{greaterthanj}) for this case.
By (\ref{greaterthank}), for any natural number $j$ such that $j > t$,
\begin{align}
S_j = & S_t + \sum_{i=n-j}^{n-t-1} (x_i + z_i - ky_i) 2^i \nonumber \\
\geq & S_t  - k(2^{n-t-1}+2^{n-t-2} + \cdots +2^{n-j})    \nonumber \\
>  & k2^{n-t} - k(2^{n-t}-2^{n-j})  = k2^{n-j}. \nonumber
\end{align}
\end{proof}

\begin{lemma}\label{lemma3}
Let $t \in Z_{\geq 0}$.
Suppose that 
	\begin{equation}
0 \leq S_t \leq 2m \times 2^{n-t}.\nonumber
	\end{equation}	

Then, we have the following cases $(a)$ and $(b)$.\\
$(a)$ If $\{x_{n-t-1},y_{n-t-1},z_{n-t-1}\} = \{1,1,0\}$ or $\{0,1,1\}$,
then
\begin{equation}
S_{t+1}< 0.\nonumber
	\end{equation}
$(b)$ If $\{x_{n-t-1},y_{n-t-1},z_{n-t-1}\} = \{1,0,1\}$ or $\{0,0,0\}$,
then
\begin{equation}
0 \leq S_{t+1} < k \times 2^{n-t-1}.\nonumber
	\end{equation}
\end{lemma}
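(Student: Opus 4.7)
The plan is to exploit the recursive structure of the partial sums directly from definition (\ref{defofs}): subtracting the definitions of $S_{t+1}$ and $S_t$ gives
\[
S_{t+1} = S_t + \bigl(x_{n-t-1} + z_{n-t-1} - k y_{n-t-1}\bigr)\, 2^{n-t-1},
\]
so everything reduces to computing the coefficient $c := x_{n-t-1} + z_{n-t-1} - k y_{n-t-1}$ for each of the four listed triples and then bounding $S_{t+1}$ using the hypothesis $0 \leq S_t \leq 2m \times 2^{n-t} = 4m \times 2^{n-t-1}$ together with the identity $k = 4m+3$.

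First I would tabulate $c$ in each case. All four triples satisfy $x_{n-t-1} \oplus y_{n-t-1} \oplus z_{n-t-1} = 0$, so by Lemma \ref{lemma01} the coefficient $c$ is automatically even; direct arithmetic gives $c = 1 - k = -(4m+2)$ for both triples in part $(a)$, $c = 2$ for $\{1,0,1\}$, and $c = 0$ for $\{0,0,0\}$. After this single observation, the proof in each case collapses to a one-line arithmetic inequality.

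For part $(a)$, plugging the upper bound for $S_t$ into the recurrence yields
\[
S_{t+1} \leq 4m \times 2^{n-t-1} - (4m+2)\times 2^{n-t-1} = -2 \times 2^{n-t-1} < 0,
\]
as required. For part $(b)$, the lower bound $S_{t+1} \geq S_t \geq 0$ is immediate because $c \geq 0$ in both sub-cases; for the upper bound, when $c = 0$ one has $S_{t+1} = S_t \leq 4m \times 2^{n-t-1} < k \times 2^{n-t-1}$, and when $c = 2$ one has $S_{t+1} \leq (4m+2)\times 2^{n-t-1} < k \times 2^{n-t-1}$, since $k = 4m+3$.

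There is no real obstacle here: the whole lemma is a bookkeeping step whose only delicate feature is that the hypothesis $S_t \leq 2m \times 2^{n-t}$ and the specific value $k = 4m+3$ are calibrated so that in case $(a)$ the decrement $-(4m+2)\times 2^{n-t-1}$ overshoots the upper bound $4m\times 2^{n-t-1}$ by exactly $2 \times 2^{n-t-1}$, while in case $(b)$ the increment $+2\times 2^{n-t-1}$ stays just below the threshold $k \times 2^{n-t-1}$. This tight calibration is precisely what forces the residue class $k \equiv 3 \pmod 4$ and the interval $[0, 2m\times 2^{n-t}]$ to appear together in the statement.
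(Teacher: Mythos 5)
Your proof is correct and follows essentially the same route as the paper: both expand $S_{t+1}-S_t$ as $(x_{n-t-1}+z_{n-t-1}-ky_{n-t-1})2^{n-t-1}$, evaluate this coefficient for each triple, and apply the bound $0\leq S_t\leq 2m\times 2^{n-t}=4m\times 2^{n-t-1}$ with $k=4m+3$. The only difference is cosmetic (your appeal to Lemma \ref{lemma01} for evenness is not needed), so nothing further is required.
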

\begin{proof}
 $(a)$ If $\{x_{n-t-1},y_{n-t-1},z_{n-t-1}\} = \{1,1,0\}$ or $\{0,1,1\}$,
 \begin{align}
S_{t+1} = & S_t +2^{n-t-1}-k \times 2^{n-t-1} \nonumber \\
 \leq &  2m \times 2^{n-t} + 2^{n-t-1}-(4m+3)2^{n-t-1} \nonumber \\
 = & -2 \times 2^{n-t-1} <0.\nonumber
\end{align}

$(b)$  
If $\{x_{n-t-1},y_{n-t-1},z_{n-t-1}\} =  \{0,0,0\}$,
\begin{align}
0 & \leq S_{t+1} \nonumber \\
& = S_t \leq 4m \times 2^{n-t-1}  \nonumber \\
& < k \times 2^{n-t-1}.  \nonumber  
\end{align}
If
\begin{equation}
\{x_{n-t-1},y_{n-t-1},z_{n-t-1}\} = \{1,0,1\},
\end{equation}
\begin{align}
0 & \leq S_{t+1}  \nonumber \\
& = S_t + 2 \times 2^{n-t-1}  \nonumber \\
& \leq (4m+2)2^{n-t-1}  \nonumber \\
& < k \times 2^{n-t-1}.  \nonumber 
\end{align}

\end{proof}

\begin{lemma}\label{lemma4}
Let $t \in Z_{\geq 0}$.
Suppose that 
	\begin{equation}
(2m+2) \times 2^{n-t} \leq S_t < k \times 2^{n-t} \label{greaterthank22b}
	\end{equation}	
and
	\begin{equation}
x_i \oplus y_i \oplus z_i=0 \label{oplus001}
	\end{equation}
for $i = n,n-1, \cdots, n-t$.
Then, we have the following cases $(a)$ and $(b)$.\\
$(a)$ If $\{x_{n-t-1},y_{n-t-1},z_{n-t-1}\} = \{1,1,0\}$ or $\{0,1,1\}$,
then, 
\begin{equation}
0 \leq S_{t+1} < k \times 2^{n-t-1}.\nonumber
\end{equation}
$(b)$ If $\{x_{n-t-1},y_{n-t-1},z_{n-t-1}\}= \{1,0,1\}$ or $\{0,0,0\}$, 
then, 
\begin{equation}
S_{t+1} \geq k \times 2^{n-t-1}.\nonumber
\end{equation}
\end{lemma}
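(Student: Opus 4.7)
The plan is to use the one-step identity $S_{t+1} = S_t + (x_{n-t-1} + z_{n-t-1} - k y_{n-t-1}) \cdot 2^{n-t-1}$, which is immediate from (\ref{defofs}), and then verify each case by direct substitution. Before substituting, I would sharpen the upper bound on $S_t$: by Lemma \ref{lemma01}, the hypothesis $x_i \oplus y_i \oplus z_i = 0$ for $i = n, n-1, \ldots, n-t$ forces $S_t = a \cdot 2^{n-t}$ for some even integer $a$. Combined with $S_t < k \cdot 2^{n-t}$ and the fact that $k = 4m+3$ is odd, this gives $a \leq k-1 = 4m+2$, so in fact $(2m+2) \cdot 2^{n-t} \leq S_t \leq (4m+2) \cdot 2^{n-t}$. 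This refinement is what makes the upper bound in case (a) work.

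In case (a) the triple contributes $x_{n-t-1} + z_{n-t-1} - k y_{n-t-1} = 1 - k = -(4m+2)$, so $S_{t+1} = S_t - (4m+2) \cdot 2^{n-t-1}$. Rewriting the refined bounds on $S_t$ in units of $2^{n-t-1}$ as $(4m+4) \cdot 2^{n-t-1} \leq S_t \leq (8m+4) \cdot 2^{n-t-1}$ and subtracting $(4m+2) \cdot 2^{n-t-1}$ yields $2 \cdot 2^{n-t-1} \leq S_{t+1} \leq (4m+2) \cdot 2^{n-t-1} < k \cdot 2^{n-t-1}$, as required. In case (b) the contribution $x_{n-t-1} + z_{n-t-1} - k y_{n-t-1}$ is either $0$ (when the triple is $(0,0,0)$) or $+2$ (when it is $(1,0,1)$), so $S_{t+1} \geq S_t \geq (2m+2) \cdot 2^{n-t} = (4m+4) \cdot 2^{n-t-1} > (4m+3) \cdot 2^{n-t-1} = k \cdot 2^{n-t-1}$, which is the desired inequality.

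The main obstacle, such as it is, lies entirely in the preliminary sharpening: without invoking Lemma \ref{lemma01} to replace $S_t < k \cdot 2^{n-t}$ with $S_t \leq (k-1) \cdot 2^{n-t}$, the upper bound in case (a) would fail by roughly a factor of two. Everything else is routine arithmetic built on the explicit value $k = 4m+3$, and the structure of the argument parallels the proof of Lemma \ref{lemma3} with the thresholds shifted upward by $(2m+2) \cdot 2^{n-t}$.
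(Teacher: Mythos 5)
Your proof is correct and follows essentially the same route as the paper's: the one-step recurrence for $S_{t+1}$, direct substitution of the digit contributions $-(4m+2)$, $0$, or $+2$ times $2^{n-t-1}$, and an appeal to Lemma \ref{lemma01} to sharpen the strict upper bound by evenness. The only cosmetic difference is that you apply the evenness refinement to $S_t$ before subtracting, whereas the paper first derives $2\cdot 2^{n-t-1}\le S_{t+1}<(4m+4)2^{n-t-1}$ and then uses the evenness of $S_{t+1}/2^{n-t-1}$ to conclude $S_{t+1}\le(4m+2)2^{n-t-1}$; the two are equivalent since the subtracted quantity is an even multiple of $2^{n-t-1}$.
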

\begin{proof}
\noindent 
$(a)$ If $\{x_{n-t-1},y_{n-t-1},z_{n-t-1}\}= \{1,1,0\}$ or $\{0,1,1\}$, then
\begin{align}
S_{t+1} & = S_t +2^{n-t-1}-k2^{n-t-1} \nonumber \\
& = S_t - (4m+2)2^{n-t-1}. \nonumber 
\end{align}
By (\ref{greaterthank22b})
\begin{align}
& (2m+2)\times 2^{n-t} - (4m+2)2^{n-t-1} \nonumber \\
& \leq S_{t+1} = S_t -(4m+2)2^{n-t-1} \nonumber \\
& < (4m+3)2^{n-t}  -(4m+2)2^{n-t-1}.  \nonumber 
\end{align}

Therefore,
\begin{equation}
2 \times 2^{n-t-1} \leq S_{t+1} < (4m+4)2^{n-t-1}.  \label{eqa1}
\end{equation}
By (\ref{oplus001}) and Lemma \ref{lemma01}, $S_{t+1} = a2^{n-t-1}$ for some even integer $a$, and therefore by (\ref{eqa1}) we have 

\begin{equation}
 0< S_{t+1} \leq  (4m+2)2^{n-t-1} < k\times 2^{n-t-1}.  \nonumber 
\end{equation}
$(b)$
 If $(x_{n-t-1},x_{n-t-1},x_{n-t-1}) = \{1,0,1\}$ or $\{0,0,0\}$, then by (\ref{greaterthank22b})
\begin{equation}
 k \times 2^{n-t-1} < (4m+4)2^{n-t-1} \leq S_t \leq S_{t+1}. \nonumber 
\end{equation}
\end{proof}

\begin{lemma}\label{fromNtoPforh}
We assume that
\begin{equation}
x \oplus y \oplus z \neq 0 \label{nimsumno0}
\end{equation}
and 
\begin{equation}\label{inequalityyz}
y \leq f(x,z).
\end{equation}
Then, at least one of the following statements is true:\\
$(1)$ $u\oplus y \oplus z= 0$ for some $u\in Z_{\geq 0}$ such that  $u<x$;\\
$(2)$ $u \oplus v \oplus z= 0$  for some $u, v \in Z_{\geq 0}$ such that  $u < x,v < y$ and $v=f(u,z)$;\\
$(3)$ $x\oplus v\oplus z= 0$ for some $v\in Z_{\geq 0}$ such that  $v<y$;\\
$(4)$ $x\oplus y\oplus w= 0$ for some $w\in Z_{\geq 0}$ such that  $w<z$ and $y \leq f(x,w)$;\\
$(5)$ $x\oplus v\oplus w= 0$ for some $v,w \in Z_{\geq 0}$ such that  $v<y,w <z$ and $v=f(x,w)$.\\
\end{lemma}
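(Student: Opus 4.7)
The plan is to split on the highest bit $h$ of $c := x \oplus y \oplus z$ and on the value of $y_h$. If $y_h = 1$, the standard Nim reduction works immediately: $v := y \oplus c$ satisfies $v < y$ and $x \oplus v \oplus z = 0$, and since the move reducing only $y$ is unconstrained by $f$, statement (3) holds.

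If instead $y_h = 0$, then since $c_h = 1$ exactly one of $x_h, z_h$ equals $1$. By the symmetry of $f$ in its two arguments and the corresponding symmetry between statements (1)--(2) and (4)--(5) under the exchange $x \leftrightarrow z$, I may assume $x_h = 1$ and $z_h = 0$. Set $u^0 := y \oplus z$; then $u^0 < x$ (they agree above bit $h$ while $u^0_h = 0 < 1 = x_h$) and $u^0 \oplus y \oplus z = 0$. If $y \leq f(u^0, z)$, reducing $x$ to $u^0$ keeps $y$ unchanged, so statement (1) holds. Otherwise $y > f(u^0, z)$, and I would construct $(u, v)$ for statement (2) by the bit-by-bit procedure illustrated in Example~\ref{chococute}(ii): set $u_i = x_i$, $v_i = y_i$ for $i > h$ and $u_h = v_h = 0$, then for each $i < h$ choose $u_i, v_i$ with $u_i \oplus v_i \oplus z_i = 0$ according to the regime of the partial sum $\sigma_t := \sum_{j=n-t}^{n} (u_j + z_j - k v_j) 2^j$, taking $v_i = 0$ in the low regime of Lemma~\ref{lemma3} and $v_i = 1$ in the high regime of Lemma~\ref{lemma4}.

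The first technical step is to show the hypothesis $y > f(u^0, z)$ forces $S_{n-h-1}^{(x,y,z)} = a \cdot 2^{h+1}$ with $0 \leq a \leq 2m$. By Lemma~\ref{lemmaforf}(b), this hypothesis is equivalent to $(y \oplus z) + z - ky < 0$. Splitting the left-hand side into bits above $h$ (which contribute exactly $S_{n-h-1}^{(x,y,z)}$ because $(y \oplus z)_i = x_i$ there), bit $h$ (zero, since $y_h = z_h = 0$), and bits below $h$ (whose total contribution is at least $(1-k)(2^h - 1)$), one obtains $a \cdot 2^{h+1} < (k-1)\cdot 2^h$, hence $a < 2m+1$. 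Lemma~\ref{lemma01} shows $a$ is even, so $\sigma_{n-h} = a \cdot 2^{h+1}$ lies in either the low regime of Lemma~\ref{lemma3} or the high regime of Lemma~\ref{lemma4}; these two lemmas then inductively maintain $\sigma_t \in [0, k \cdot 2^{n-t})$ until $\sigma_n \in [0, k)$, which gives $v = f(u, z)$, while $u < x$ comes from the strict drop at bit $h$.

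The main obstacle I anticipate is verifying $v \leq y$ (and in fact $v < y$), which is needed for the move to be valid and yield statement (2). The construction matches $v$ with $y$ down through bit $h$, but a high-regime step below $h$ could produce $v_i = 1$ where $y_i = 0$. I plan to track the highest bit $i^* < h$ at which the regime first leaves low---such a transition must come from a low-regime $(1,0,1)$ step and therefore requires $z_{i^*} = 1$---and to use the sub-case hypothesis $y > f(u^0, z)$ to show that $y$ must carry a $1$-bit at some position $j \in [i^*, h)$, which is processed in low regime with $v_j = 0$ and settles $v < y$. If the construction happens to give $v = y$, then $y = f(u,z)$ and statement (1) holds with this new $u$; otherwise $v = f(u, z) < y$ gives statement (2).
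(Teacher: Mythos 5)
Your overall strategy is the same as the paper's: locate the highest bit where the nim-sum fails, dispose of the case $y_h=1$ by statement (3), and otherwise build $(u,v)$ bit by bit using Lemmas \ref{lemma01}--\ref{lemma4} to keep the running sum in $[0,k\cdot 2^{n-t})$ so that $v=f(u,z)$ at the end. Your upfront test ``is $y\leq f(y\oplus z,z)$?'' is a clean way to collapse the several sub-cases the paper devotes to statement (1). The construction itself, and the estimate showing $0\leq S^{(x,y,z)}_{n-h-1}=a\cdot 2^{h+1}$ with $a\leq 2m$, are correct.

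The gap is in your argument that $v<y$. You claim the running sum can first leave the low regime only via a low-regime $(1,0,1)$ step, forcing $z_{i^*}=1$. This is false for $m\geq 1$: the low regime is $S_t\leq 2m\cdot 2^{n-t}=4m\cdot 2^{n-t-1}$, and since $4m\geq 2m+2$ when $m\geq 1$, even a $(0,0,0)$ step (which leaves $S$ unchanged) can land in the high regime of the next bit; moreover your starting value $2a\cdot 2^{h}$ with $2a\leq 4m$ can already be in the high regime at the very first bit below $h$, so there need not be any ``transition'' at all. In addition, even where a transition bit $i^*$ exists, the $1$-bit of $y$ that your hypothesis $y>f(u^0,z)$ produces may sit at $i^*$ itself, where your construction puts $v_{i^*}=1$, not $0$, so it does not lock in $v<y$, and the analysis must continue below $i^*$; your plan gives no mechanism for that. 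The statement you actually need is: at every bit $i<h$ lying above the first low-regime bit with $y_i=1$, the regime can be high only if $y_i=1$. This does follow from $y>f(y\oplus z,z)$ (if the first disagreement were a high-regime bit $i$ with $y_i=0$ and all higher bits agree with $y$, then $u^0+z-ky\geq (2m+2)2^{i+1}-(4m+2)(2^i-1)>0$, contradicting the hypothesis), but it is a different argument from the one you propose. The paper avoids the issue by bookkeeping: it keeps $v_i=y_i$ until the low regime forces $v_i=0<1=y_i$, which locks in $v<y$ before any high-regime bit can create a disagreement, and it diverts the ``high regime with $y_i=0$'' branch to statement (1) via Lemma \ref{lemma2}. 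You should either adopt that bookkeeping or prove the displayed invariant; as written, the step ``settles $v<y$'' is not established.
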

\begin{proof}
Let $x= \sum\limits_{i = 0}^n {{x_i}} {2^i}$ and $y= \sum\limits_{i = 0}^n {{y_i}} {2^i}$, and $z= \sum\limits_{i = 0}^n {{z_i}} {2^i}$. 

If $n=0$, we have $x,z \leq 1$, and  $y \leq f(x,z) = \lfloor \frac{x+z}{k}\rfloor = 0$. Then, by (\ref{nimsumno0}), we have
$\{x,y,z\} = \{1,0,0\}$ or $\{0,0,1\}$. In this case we obtain $(1)$ for $\{u,y,z\} = \{0,0,0\}$ or $(4)$ for $\{x,y,w\} = \{0,0,0\}$ by reducing $x=1$ to $u=0$ or reducing $z=1$ to $w =0$.

Next, we assume that $n \geq 1$ and that there exists a non-negative integer $t$ such that 
\begin{equation}\label{zerotonminusut1}
x_i \oplus y_i \oplus z_i = 0 
\end{equation}
for $i = n,n-1,...,n-t+1$ and  
\begin{equation}\label{xyznminush}
x_{n-t} \oplus y_{n-t} \oplus z_{n-t} \neq 0.
\end{equation}
Let $S_k = \sum_{i=n-k}^n (x_i + z_i - ky_i) 2^i$ for $k=0,1,\cdots ,t-1$, and we may then 
define $S_k$ for $k=t,t+1,\cdots ,n$.

By (\ref{zerotonminusut1}), (\ref{inequalityyz}), and Lemma \ref{lemma1b},
we have
\begin{equation}\label{xyznminush22}
S_{t-1} \geq 0.
\end{equation}

We then have three cases.\\
\underline{Case $(1)$} 
Suppose that $\{x_{n-t},y_{n-t},z_{n-t}\}=\{1,0,0\}$. We reduce $x$ to $u$ and for $i = 0,1, \cdots, t-1$ let 
\begin{equation}
u_{n-i} = x_{n-i}, \nonumber 
\end{equation}
and we define $u_{n-i}$ for $i = t,t+1, \cdots, n$ using an inductive method with the following steps $[I]$ and $[II]$.\\
Step $[I]$ 
Let $u_{n-t}=0$. Then, 
\begin{equation}
u= \sum\limits_{i = 0}^n {{u_i}} {2^i}<  \sum\limits_{i = 0}^n {{x_i}} {2^i}=x.\nonumber
\end{equation}
Because $u_{n-t}=0$ and $y_{n-t}=z_{n-t}=0$, by (\ref{xyznminush22}) we have
\begin{equation}\label{xyznminush22b}
S_{t} = S_{t-1}+(u_{n-t}+y_{n-t}-kz_{n-t})2^{n-t} = S_{t-1} \geq 0.\nonumber
\end{equation}
We then consider two subcases $(1.1)$ and $(1.2)$ according to the value of $S_{t}$.\\
\underline{Subcase $(1.1)$} 
We suppose that 
\begin{equation}
0 \leq S_{t} \leq 2m \times 2^{n-t}.\label{from0to2t}
\end{equation}
We then have two subsubcases for two possible values of $z_{n-t-1}$. \\
\underline{Subsubcase $(1.1.1)$} 
Suppose that $z_{n-t-1} = 0$. We have two subsubsubcases for two possible values of $y_{n-t-1}$. \\
\underline{Subsubsubcase $(1.1.1.1)$} 
If $y_{n-t-1} = 0$, let 
\begin{equation}
\{u_{n-t-1}, y_{n-t-1},z_{n-t-1}\}=\{0,0,0\} \nonumber
\end{equation}
and 
\begin{equation}
S_{t+1}= S_{t}+(u_{n-t-1}+z_{n-t-1}-ky_{n-t-1})2^{n-t-1}. \nonumber
\end{equation}
Then, by Lemma \ref{lemma3} and (\ref{from0to2t})
\begin{equation}\label{condition01}
0 \leq S_{t+1} < k2^{n-t-1}.
\end{equation}
Then, we begin Step $[II]$ with (\ref{condition01}) while knowing that $y$ has not been reduced.\\
\underline{Subsubsubcase $(1.1.1.2)$} 
If $y_{n-t-1} = 1$, let $v_{n-t-1}=0<y_{n-t-1}$.
Then, we have
\begin{equation}
 \sum\limits_{i = 0}^n {{v_i}} {2^i} < \sum\limits_{i = 0}^n {{y_i}} {2^i} \nonumber
\end{equation}
for any values of $v_{i}$ for $i = 0,1, \cdots, n-t-1$. In this subsubsubcase, 
we reduce $y$ to $v$ by reducing $x$ to $u$.
For a concrete example of reducing $y$ to $v$ by reducing $x$ to $u$, see $(ii)$ and $(iii)$ in Example \ref{chococute}.

Let 
\begin{equation}
\{u_{n-t-1}, v_{n-t-1},z_{n-t-1}\}=\{0,0,0\} \nonumber
\end{equation}
and 
\begin{equation}
S_{t+1}= S_{t}+(u_{n-t-1}+z_{n-t-1}-kv_{n-t-1})2^{n-t-1},\nonumber
\end{equation}
then by Lemma \ref{lemma3} and (\ref{from0to2t})
\begin{equation}
0 \leq S_{t+1} < k2^{n-t-1}.\label{condition02}
\end{equation}
Then, we begin Step $[II]$ with (\ref{condition02}) while knowing that we have reduced $y$ to $v$.\\
\underline{Subsubcase $(1.1.2)$} 
Suppose that $z_{n-t-1} = 1$. We have two subsubsubcases for two possible values of $y_{n-t-1}$. \\
\underline{Subsubsubcase $(1.1.2.1)$} 
If $y_{n-t-1} = 0$, let
\begin{equation}
\{u_{n-t-1}, y_{n-t-1},z_{n-t-1}\}=\{1,0,1\} \nonumber
\end{equation}
and 
\begin{equation}
S_{t+1}= S_{t}+(u_{n-t-1}+z_{n-t-1}-ky_{n-t-1})2^{n-t-1}, \nonumber
\end{equation}
then, by Lemma \ref{lemma3} and (\ref{from0to2t})
\begin{equation}
0 \leq S_{t+1} < k 2^{n-t-1}.\label{condition03}
\end{equation}
Then, we begin Step $[II]$ with (\ref{condition03}) while knowing that $y$ has not been reduced.\\
\underline{Subsubsubcase $(1.1.2.2)$} 
If $y_{n-t-1} = 1$, let $v_{n-t-1}=0<y_{n-t-1}$.
Then, we have
\begin{equation}
 v=\sum\limits_{i = 0}^n {{v_i}} {2^i} < \sum\limits_{i = 0}^n {{y_i}} {2^i}=y \nonumber
\end{equation}
for any values of $v_{i}$ for $i = 0,1, \cdots, n-t-1$. In this subsubsubcase, 
we reduce $y$ to $v$ by reducing $x$ to $u$. For a concrete example of reducing $y$ to $v$ by reducing $x$ to $u$, see $(ii)$ and $(iii)$ in Example \ref{chococute}.

Then, let 
\begin{equation}
\{u_{n-t-1}, v_{n-t-1},z_{n-t-1}\}=\{1,0,1\} \nonumber
\end{equation}
and 
\begin{equation}
S_{t+1}= S_{t}+(u_{n-t-1}+z_{n-t-1}-kv_{n-t-1})2^{n-t-1}, \nonumber
\end{equation}
then, by Lemma \ref{lemma3} and (\ref{from0to2t})
\begin{equation}
0 \leq S_{t+1} < k 2^{n-t-1}. \label{condition04}
\end{equation}
Then, we begin Step $[II]$ with (\ref{condition04}) while knowing that we have reduced $y$ to $v$.\\
\underline{Subcase $(1.2)$} 
We suppose that 
\begin{equation}
(2m+2)2^{n-t} \leq S_{t} < k2^{n-t}.\label{biggerth2m2}
\end{equation}
We have two subsubcases for two possible values of $z_{n-t-1}$. \\
\underline{Subsubcase $(1.2.1)$} 
Suppose that $z_{n-t-1} = 0$. We have two subsubsubcases for two possible values of $y_{n-t-1}$. \\
\underline{Subsubsubcase $(1.2.1.1)$} 
If $y_{n-t-1} = 1$, let 
\begin{equation}
\{u_{n-t-1}, y_{n-t-1},z_{n-t-1}\}=\{1,1,0\} \nonumber
\end{equation}
and 
\begin{equation}
S_{t+1}= S_{t}+(u_{n-t-1}+z_{n-t-1}-ky_{n-t-1})2^{n-t-1},\nonumber
\end{equation}
then, by Lemma \ref{lemma4} and (\ref{biggerth2m2})
\begin{equation}
0 \leq S_{t+1} < k2^{n-t-1}.\label{condition05}
\end{equation}
Then, we begin Step $[II]$ with (\ref{condition05}) while knowing that $y$ has not been reduced.\\
\underline{Subsubsubcase $(1.2.1.2)$} 
If $y_{n-t-1} = 0$, let 
\begin{equation}
\{u_{n-t-1}, y_{n-t-1},z_{n-t-1}\}=\{0,0,0\}.\nonumber
\end{equation}
By Lemma \ref{lemma4} and (\ref{biggerth2m2})
\begin{equation}
 S_{t+1} \geq k2^{n-t-1}.\label{condition06}
\end{equation}
Then, we begin Step $[II]$ with (\ref{condition06}) while knowing that $y$ has not been reduced.\\
\underline{Subsubcase $(1.2.2)$} 
Suppose that $z_{n-t-1} = 1$. We have two subsubsubcases for two possible values of $y_{n-t-1}$. \\\\
\underline{Subsubsubcase $(1.2.2.1)$} 
If $y_{n-t-1} = 1$, let 
\begin{equation}
\{u_{n-t-1}, y_{n-t-1},z_{n-t-1}\}=\{0,1,1\}.\nonumber
\end{equation}
and 
\begin{equation}
S_{t+1}= S_{t}+(u_{n-t-1}+z_{n-t-1}-ky_{n-t-1})^{n-t-1},\nonumber
\end{equation}
then, by Lemma \ref{lemma4} and (\ref{biggerth2m2})
\begin{equation}
0 \leq S_{t+1} < k2^{n-t-1}.\label{condition07}
\end{equation}
Then, we begin Step $[II]$ with (\ref{condition07}) and the fact that $y$ has not been reduced.\\
\underline{Subsubsubcase $(1.2.2.2)$} 
If $y_{n-t-1} = 0$, let 
\begin{equation}
\{u_{n-t-1}, y_{n-t-1},z_{n-t-1}\}=\{1,0,1\}.\nonumber
\end{equation}
By Lemma \ref{lemma4} and (\ref{biggerth2m2})
\begin{equation}
 S_{t+1} \geq k 2^{n-t-1}.\label{condition08}
\end{equation}
Then, we begin Step $[II]$ with (\ref{condition08}) while knowing that $y$ has not been reduced.\\
\underline{Case $(2)$} 
We suppose that $\{x_{n-t},y_{n-t},z_{n-t}\}=\{0,0,1\}$.
Then, we can use the same method used for Case $(1)$.\\
\underline{Case $(3)$} 
We suppose that $y_{n-t}=1$.
Let $v_{n-t}=0 < y_{n-t}$ and $v_{i}=x_{i}+z_{i}$ (mod 2) for $i=n-t-1, \cdots, 0$. Then, we have $x\oplus v\oplus z= 0$ and $v < y \leq f(x,z)$, and we have $(3)$ of this lemma. In this case, we do not need Step $[II]$\\
Step $[II]$. We have two cases.\\
\underline{Case $(1)$} This is a sequel to Case $(1)$ of Step $[I]$.
Here, the procedure consists of three subcases.\\
\underline{Subcase $(1.1)$} Suppose that $S_{t+1} \geq k 2^{n-t-1}$.
In this case, $y$ is not reduced to $v$ in the last procedure, i.e., Step $[I]$.
By Lemma \ref{lemma2}, we let $u_{i}=y_i + z_i \ (\mod 2)$ for $i=n-t-2, \cdots, 0$ without affecting the values of $y_{i}$ for $i=n-t-2, \cdots, 0$. Then, we have $(1)$ for this lemma.\\
\underline{Subcase $(1.2)$} Suppose that $0 \leq S_{t+1} < k 2^{n-t-1}$ and 
 $y$ was reduced to $v$ in Step $[I]$.
 Then, we choose the values of $u_{n-i}, v_{n-i}$ for $i = t+2, t+3, \cdots, n$ such that 
 $0 \leq S_{i} < k 2^{n-i}$ by the following $(a)$ or $(b)$.\\
$(a)$ For $i \geq t+1$, if $0 \leq S_{i} < 2m \times 2^{n-i}$, then we let $\{u_{n-i-1},v_{n-i-1},z_{n-i-1} \}$
 $= \{0,0,0\}$ or $\{1,0,1\}$ when $z_{n-i-1}=0$ or  $z_{n-i-1}=1$, respectively.
 Then, by Lemma \ref{lemma3}, we have $0 \leq S_{i+1} < k 2^{n-i-1}$.\\
$(b)$ For $i \geq t+1$, if $ S_{i} \geq (2m+2) \times 2^{n-i}$, then we let $\{u_{n-i-1},v_{n-i-1},z_{n-i-1} \}$
 $= \{1,1,0\}$ or $\{0,1,1\}$ when $z_{n-i-1}=0$ or $z_{n-i-1}=1$, respectively.
 Then, by Lemma \ref{lemma4}, we have $0 \leq S_{i+1} < k 2^{n-i-1}$.
 
 Therefore, for $i = t+2, \cdots ,$ we have $0 \leq S_{i} < k 2^{n-i}$, and finally 
 we have $0 \leq S_{n} < k 2^{n-n}=k$. We then have 
 $v = f(u,z)$ and $u \oplus v \oplus z = 0$; therefore, we have $(5)$ of this lemma.\\
\underline{Subcase $(1.3)$} Suppose that $0 \leq S_{t+1} < k 2^{n-t-1}$ and 
 $y$ was not reduced to $v$ during the last procedure. In this case, we use the same method as in step $[I]$. \\
 \underline{Case $(2)$} This is a sequel to Case $(2)$ of Step $[I]$.
Then, we can use the same method used for Case $(1)$ of Step $[I]$.
\end{proof}

\begin{lemma}\label{fromPtoNforh}
We assume that
 $x \oplus y \oplus z = 0$ and 
\begin{equation}\label{inequalityyz2}
y \leq f(x,z).
\end{equation}
Then, the following hold:\\
$(1)$ $u\oplus y \oplus z \neq 0$ for any $u\in Z_{\geq 0}$ such that  $u<x$;\\
$(2)$ $u \oplus v \oplus z \neq  0$ for any $u, v \in Z_{\geq 0}$ such that  $u < x,v < y$, and $v=f(u,z)$;\\
$(3)$ $x\oplus v\oplus z \neq 0$ for any $v\in Z_{\geq 0}$ such that  $v<y$;\\
$(4)$ $x\oplus y\oplus w \neq 0$ for any $w\in Z_{\geq 0}$ such that  $w<z$ and $y \leq f(x,w)$;\\
$(5)$ $x\oplus v\oplus w \neq 0$ for any $v,w \in Z_{\geq 0}$ such that  $v<y,w <z$, and $v=f(x,w)$.
\end{lemma}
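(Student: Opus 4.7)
The plan is to split the five claims into two tiers: the single-coordinate cases (1), (3), (4), in which only one of $x, y, z$ is altered, and the two-coordinate cases (2), (5), in which two entries change simultaneously and the function $f$ plays a nontrivial role. For (1), (3), (4) I would argue by classical Nim uniqueness: XOR-ing the hypothesized equation against $x \oplus y \oplus z = 0$ forces the altered coordinate to equal its original value. For example, in (1) the relations $u \oplus y \oplus z = 0$ and $x \oplus y \oplus z = 0$ together yield $u = x$, contradicting $u < x$; cases (3) and (4) are entirely analogous.

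For (2) and (5), the two proofs are symmetric under the interchange $(x,z)\leftrightarrow(u,w)$, so I focus on (5). Assume for contradiction that $v < y$, $w < z$, $v = f(x,w)$, and $x \oplus v \oplus w = 0$. Combining the two XOR equations gives $y \oplus v = z \oplus w$, so the highest bit $j$ at which $z$ and $w$ differ coincides with the highest bit at which $y$ and $v$ differ. The strict inequalities $w < z$ and $v < y$ force $z_j = y_j = 1$ and $w_j = v_j = 0$, and then the XOR relation at bit $j$ gives $x_j = 0$. Above bit $j$ the triples $(x,y,z)$ and $(x,v,w)$ agree bit-by-bit, so
\[T := S_{n-j-1}^{(x,y,z)} = S_{n-j-1}^{(x,v,w)}\]
is well defined (with the convention $S_{-1}=0$ when $j=n$), and Lemma \ref{lemma01} gives $T = a\cdot 2^{j+1}$ for some even integer $a$; equivalently, $T$ is a multiple of $2^{j+2}$.

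Reading the bit-$j$ contributions yields $S_{n-j}^{(x,y,z)} = T - (k-1)2^j$ and $S_{n-j}^{(x,v,w)} = T$. From $y \leq f(x,z)$, Lemma \ref{lemmaforf} gives $S_n^{(x,y,z)} \geq 0$, and the contrapositive of Lemma \ref{lemma1} (whose XOR hypothesis holds at every bit) propagates this to $S_{n-j}^{(x,y,z)} \geq 0$, so $T \geq (k-1)2^j$. From $v = f(x,w)$, Lemma \ref{lemmaforf} gives $S_n^{(x,v,w)} < k$, and the contrapositive of Lemma \ref{lemma2} gives $S_{n-j}^{(x,v,w)} < k\cdot 2^j$, so $T < k\cdot 2^j$. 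Thus $T/2^j$ is a multiple of $4$ lying in $[k-1, k) = [4m+2, 4m+3)$; but the only integer in that half-open window is $4m+2$, which is not divisible by $4$—a contradiction. Case (2) then follows by the same argument after swapping the roles of $(x,z)$ and $(u,w)$.

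The main obstacle I foresee is spotting the invariant $T$ and recognizing exactly where the hypothesis $k = 4m+3$ is needed: it is precisely what traps the integer $4m+2 \equiv 2 \pmod 4$ inside the window $[k-1,k)$, thereby making it miss every multiple of $4$. Without this residue structure the final divisibility contradiction breaks down, which matches the restriction of the theorem to $k\equiv 3\pmod 4$.
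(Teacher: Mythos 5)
Your proposal is correct and follows essentially the same route as the paper: cases (1), (3), (4) by Nim uniqueness, and for the two-coordinate cases the same identification of the highest differing bit as a $\{1,1,0\}\to\{0,0,0\}$ transition with $x_j=0$, followed by bounding the partial sum $S$ via Lemmas \ref{lemmaforf}, \ref{lemma01}, \ref{lemma1}, and \ref{lemma2}. The only cosmetic difference is that you derive the two-sided bound $(k-1)2^j \leq T < k2^j$ and conclude by divisibility, whereas the paper splits into the two cases $T \leq 4m\cdot 2^j$ and $T \geq (4m+4)2^j$ (via Lemmas \ref{lemma3} and \ref{lemma4}) and contradicts one hypothesis in each; the underlying arithmetic is identical.
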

\begin{proof} 
If $x \oplus y \oplus z = 0$, a positive value of the nim-sum is obtained by changing the value of one of $x,y,z$.
Therefore, we have 
$(1)$, $(3)$, and $(4)$.

Next, we prove $(2)$. The only way to have 
$u \oplus v \oplus z =  0$ for some $u, v \in Z_{\geq 0}$ such that  $u < x,v < y$ and 
\begin{equation}\label{equalityfor}
v=f(u,z)
\end{equation}
is to reduce $\{x_{n-t},y_{n-t},z_{n-t}\} = \{1,1,0\}$ to $\{u_{n-t},v_{n-t},z_{n-t}\} = \{0,0,0\}$. We consider two cases.\\
\underline{Case $(1)$} Suppose that $0 \leq S_{t-1} \leq 2m \times 2^{n-t+1}$. Then, for $\{x_{n-t},y_{n-t},z_{n-t}\} = \{1,1,0\}$ by Lemma \ref{lemma3}, we have $S_{t+1}< 0$. Then, by Lemmas \ref{lemma1} and \ref{lemmaforf}, we have $y > f(x,y)$. This contradicts (\ref{inequalityyz2}).\\
\underline{Case $(2)$} Suppose that $ S_{t-1} \geq (2m+2) \times 2^{n-t+1}$. For $\{u_{n-t},v_{n-t},z_{n-t}\} = \{0,0,0\}$ by Lemma \ref{lemma4}, 
$S_{t} \geq k \times 2^{n-t}$; therefore, by Lemma \ref{lemma2}, $S_n  \geq  k$. Using Lemma \ref{lemmaforf}, we then have $y < f(u,v)$.
This contradicts (\ref{equalityfor}).

Similarly we can prove $(5)$.
\end{proof}

\begin{theorem}\label{theoremforoddk}
Let $f(x,z)  = \lfloor \frac{x+z}{k}\rfloor$ for $k = 4m+3$. Then, the
chocolate bar $CB(f,x,y,z)$ is a $\mathcal{P}$-position if and only if 
	\begin{equation}
		 x \oplus y \oplus z=0. \label{nscondtionfornim0}
	\end{equation}
\end{theorem}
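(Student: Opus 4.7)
The plan is to reduce the theorem to the two preceding lemmas via a straightforward induction on the game tree, or equivalently on $x+y+z$, which strictly decreases with every legal move. Note that by Definition \ref{defofbarwithfunc3d} the effective height of $CB(f,x,y,z)$ is $\min(f(x,z),y)+1$, and an inspection of $move_f$ shows that the invariant $y \leq f(x,z)$ is preserved under every successor (in each of the three move types, the new second coordinate is taken as a $\min$ that is bounded above by $f$ at the new first and third coordinates). We may therefore assume $y \leq f(x,z)$ throughout, which is exactly the hypothesis under which Lemmas \ref{fromNtoPforh} and \ref{fromPtoNforh} operate.

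For the base case $\{0,0,0\}$, the bar consists only of the single bitter block, so there are no legal moves and the position is by definition a $\mathcal{P}$-position; the equivalence holds trivially since $0 \oplus 0 \oplus 0 = 0$. For the inductive step, assume the theorem holds for every position with strictly smaller coordinate sum (equivalently, every strict descendant in the game tree).

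If $x \oplus y \oplus z = 0$, I invoke Lemma \ref{fromPtoNforh}: its five clauses together cover every type of successor in $move_f(\{x,y,z\})$, namely cuts reducing $x$ (handled by clause (1) when $f(u,z) \geq y$, so the new second coordinate remains $y$, and by clause (2) when $f(u,z) < y$, so the new second coordinate is $f(u,z)$), cuts reducing $y$ only (clause (3)), and cuts reducing $z$ (clauses (4) and (5) analogously). Each successor has nonzero nim-sum, so by the inductive hypothesis each is an $\mathcal{N}$-position, making $\{x,y,z\}$ a $\mathcal{P}$-position. If $x \oplus y \oplus z \neq 0$, I invoke Lemma \ref{fromNtoPforh}: at least one of its five conclusions produces a successor $\{x',y',z'\} \in move_f(\{x,y,z\})$ with $x' \oplus y' \oplus z' = 0$, which by the inductive hypothesis is a $\mathcal{P}$-position, so $\{x,y,z\}$ is an $\mathcal{N}$-position.

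The genuinely hard work has already been done inside the two lemmas; what remains is essentially bookkeeping. The one point requiring care is matching the five clauses of each lemma to the three geometric move types in $move_f$, since cuts in the $x$ and $z$ directions each split into two sub-types according to whether the $\min$ in $\min(f(u,z),y)$ or $\min(y,f(x,w))$ is attained by $f$ or by $y$; once this correspondence is written down cleanly, the induction closes and the theorem follows directly from the definition of $\mathcal{P}$- and $\mathcal{N}$-positions, without needing to appeal to Theorem \ref{theoremofsumg2}.
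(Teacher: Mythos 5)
Your proposal is correct and follows essentially the same route as the paper: both arguments delegate all the real work to Lemmas \ref{fromNtoPforh} and \ref{fromPtoNforh} and then close the loop by the standard backward induction characterizing $\mathcal{P}$-positions (the paper phrases this as an alternating strategy between the sets $A_k$ and $B_k$ terminating at $\{0,0,0\}$, which is the same induction in different clothing). Your explicit check that the invariant $y \leq f(x,z)$ is preserved by every move in $move_f$ is a useful point of care that the paper's proof leaves implicit.
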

\begin{proof}
Let $A_k=\{\{x,y,z\}:x\oplus y \oplus z = 0\}$ and $B_k =\{\{x,y,z\}: x\oplus y \oplus z \neq 0\}$.
 If we begin the game with a position $\{x,y,z\}\in A_{k}$, then using Theorem \ref{fromPtoNforh}, any option leads to a position  $\{p,q,r\} \in B_k$. 
From this position $\{p,q,r\}$ by Theorem \ref{fromNtoPforh}, our opponent can choose an appropriate option that leads to a position in $A_k$. Note that any option reduces some of the numbers in the coordinates. In this way, our opponent can always reach a position in $A_k$, and finally, they win by reaching $\{0,0,0\}\in A_{k}$. Therefore, $A_k$ is the set of $\mathcal{P}$-positions.

If we begin the game with a position $\{x,y,z\}\in B_{k}$, then by  Theorem \ref{fromNtoPforh}, we can choose an appropriate option that leads to a position $\{p,q,r\}$ in $A_k$. From $\{p,q,r\}$, any option chosen by our opponent leads to a position in $B_k$. In this way, we win the game by reaching $\{0, 0, 0\}$. Therefore, $B_k$ is the set of $\mathcal{N}$-positions.\\
\end{proof}

\section{othercases}\label{others}
The result shown in Section \ref{sub4mone} depends on the assumption that $k$ is odd, but it seems that a similar result can be proven for an even number $k$ with a restriction on the size of $x,z$.

The authors discovered the following conjecture via calculations using the computer algebra system Mathematica, but they have not managed to prove it.

\begin{conjecture}\label{theoremmanabe}
Let $f(x,z) = \lfloor \frac{x+z}{k}\rfloor$ for $k = 2^{a+2}m+2^{a+1}$ and $x,z \leq (2^{2a+2}-2^{a+1})m+2^{2a+1}-1$, where  $a,m \in Z_{\ge 0}$. Then, the chocolate bar $CB(f,x,y,z)$ is a $\mathcal{P}$-position if and only if 
	\begin{equation}
		 x \oplus y \oplus z=0. \nonumber
	\end{equation}
\end{conjecture}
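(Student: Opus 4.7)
The plan is to follow the template of the proof of Theorem \ref{theoremforoddk}: establish separate lemmas showing that from any position with $x\oplus y\oplus z\neq 0$ there is a legal move to a position with nim-sum zero (the analog of Lemma \ref{fromNtoPforh}), and that from a position with $x\oplus y\oplus z=0$ every legal move produces a nonzero nim-sum (the analog of Lemma \ref{fromPtoNforh}). Together with the fact that $\{0,0,0\}$ is a terminal $\mathcal{P}$-position, these two statements give the stated equivalence by the same descent argument used at the end of Theorem \ref{theoremforoddk}. What must be rebuilt is the even-$k$ version of the sign-analysis machinery in Lemmas \ref{lemmaforf}--\ref{lemma4}.

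Lemma \ref{lemmaforf} relates $S_n=x+z-ky$ to whether $y$ equals, exceeds, or falls below $f(x,z)$ and is pure floor arithmetic, so it carries over unchanged. The real content begins with Lemma \ref{lemma01}, where the oddness of $k$ is used to conclude that $x_i+z_i-ky_i$ is even whenever $x_i\oplus y_i\oplus z_i=0$, and hence that $S_t$ is an even multiple of $2^{n-t}$; this single fact drives every sign estimate in Lemmas \ref{lemma1}--\ref{lemma4}. For $k=2^{a+1}(2m+1)$ the bit-by-bit parity argument fails, but using the factorization $ky=(2m+1)\sum_i y_i 2^{i+a+1}$ one sees that $y$-bits only affect bit positions at height $\ge i+a+1$ in $ky$, and the odd factor $2m+1$ still supplies a parity argument one block up. I would therefore re-index the partial sums in strides of $a+1$ bits and prove a block analog of Lemma \ref{lemma01}, together with corresponding threshold lemmas in which the critical quantities $2m\cdot 2^{n-t}$ and $(2m+2)\cdot 2^{n-t}$ appearing in Lemmas \ref{lemma3}--\ref{lemma4} are replaced by their block counterparts on the scale of $2^{n-t+a+1}$.

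The hypothesis $x,z\le (2^{2a+2}-2^{a+1})m+2^{2a+1}-1$ should enter as follows. Rewriting this as $x+1\le 2^{a+1}((2^{a+1}-1)m+2^a)$ shows that the bound is a specific multiple of $2^{a+1}$, i.e. it controls the top block of $x$ and of $z$ and therefore bounds $f(x,z)$ so that the nontrivial $y$-bits live only in the lower blocks. This is exactly what is needed for the block induction to terminate cleanly at the top and for the responses constructed in the analog of Lemma \ref{fromNtoPforh} to have the freedom to set the low bits independently, just as the choice of $u_1,u_0$ is free in equation (\ref{xbigeru0}) of Example \ref{chococute}. In the odd case this freedom is automatic because a single extra bit separates consecutive admissible values; in the even case one needs a whole block of $a+1$ bits of slack, and the stated bound is precisely what supplies it.

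The hardest step, and presumably the reason this has so far resisted proof, is the case analysis in the even analog of Lemma \ref{fromNtoPforh}. In the odd case there are already eight subsubsubcases driven by the triple $\{x_{n-t-1},y_{n-t-1},z_{n-t-1}\}$, and each resolves by a single application of Lemma \ref{lemma3} or \ref{lemma4}. In the even case each block of $a+1$ bits carries $2^{3(a+1)}$ triple configurations, and for each one must simultaneously certify legality of the move (that $v=\min(f(u,w),y)$ when $x$ or $z$ is reduced) and preserve the block-level sign invariant under the carries produced by multiplication by $2m+1$. Finding an inductive invariant that collapses this branching into a uniform argument, rather than an exponential-in-$a$ explosion of cases, is where I expect the main difficulty to lie, and it is plausible that the right invariant is not a direct analog of $S_t$ at all but some refined quantity that tracks residues of $x+z-ky$ modulo $2^{a+1}$ block by block.
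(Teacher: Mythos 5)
This statement is presented in the paper as an open conjecture --- the authors explicitly state they have not managed to prove it --- so there is no proof of record to compare against; what matters is whether your proposal closes the gap on its own, and it does not. What you have written is a research plan, not a proof. Every load-bearing component is deferred: the block analog of Lemma \ref{lemma01} is never stated, let alone proved; the replacement threshold constants for Lemmas \ref{lemma3} and \ref{lemma4} are described only as ``block counterparts on the scale of $2^{n-t+a+1}$'' without a single inequality being verified; and the analog of Lemma \ref{fromNtoPforh}, which is where all of the work in the odd case actually lives, is explicitly left open --- you yourself write that you do not know whether the right invariant is an analog of $S_t$ at all. A proof that ends by naming the place where the main difficulty lies and speculating about what object might resolve it has not proved anything.

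Two more specific concerns. First, your diagnosis of why the odd-$k$ argument breaks is correct (for even $k$, $x_i+z_i-ky_i$ has the parity of $x_i+z_i$, so the evenness of $S_t$ in Lemma \ref{lemma01} is lost), but the proposed repair via the factorization $k=2^{a+1}(2m+1)$ introduces carries across blocks when $ky$ is computed, and you give no mechanism for controlling them; the clean identity $S_n=x+z-ky$ that drives Lemma \ref{lemmaforf} does not decompose blockwise without such control. Second, your reading of the hypothesis $x,z\le(2^{2a+2}-2^{a+1})m+2^{2a+1}-1$ as supplying ``a whole block of $a+1$ bits of slack'' is an interpretation, not a derivation: nothing in the proposal shows that this specific bound (as opposed to, say, twice or half of it) is what makes the constructed responses legal moves in the sense of Definition \ref{movefor3dimension}, i.e.\ that $v=\min(f(u,z),y)$ holds for the $v$ you would construct. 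Until the block lemmas are written down and the case analysis is actually carried out --- or replaced by the ``refined invariant'' you gesture at --- this remains a plausible strategy for attacking the conjecture rather than a proof of it.
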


\begin{rem}
If we compare Theorem \ref{theoremforoddk} and Conjecture \ref{theoremmanabe}, it seems very difficult to obtain the necessary and sufficient condition for Question 2. 
\end{rem}

The authors also have the following conjecture that also 
may be derived via calculations using the computer algebra system Mathematica.

\begin{conjecture}
Let $f(x,z) = \lfloor \frac{x+z}{k}\rfloor$ for $k = 4m + 1$. Then, the chocolate bar $CB(f,x,y,z)$ is a $\mathcal{P}$-position if and only if 
	\begin{equation}
	(x+1) \oplus y \oplus (z+1)=0.  \nonumber
	\end{equation}
\end{conjecture}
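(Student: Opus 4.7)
The natural plan is to mirror the proof of Theorem~\ref{theoremforoddk}, working with the shifted coordinates $X = x+1$ and $Z = z+1$. Let
\[
A'_k = \{\{x,y,z\} : (x+1)\oplus y \oplus (z+1) = 0\}, \quad B'_k = \{\{x,y,z\} : (x+1)\oplus y \oplus (z+1) \neq 0\}.
\]
The goal is to establish two analogues of Lemmas~\ref{fromNtoPforh} and~\ref{fromPtoNforh}: every move from a position in $A'_k$ lands in $B'_k$, and from every position in $B'_k$ (with $y \leq f(x,z)$) there exists a move to a position in $A'_k$. The theorem then follows by the same induction on the game tree as in Theorem~\ref{theoremforoddk}, with the base case provided by the terminal $\{0,0,0\} \in A'_k$, since $1 \oplus 0 \oplus 1 = 0$.

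The technical core is a bit-level analysis using the shifted variables. Write $X = \sum_{i=0}^n X_i 2^i$, $Z = \sum_{i=0}^n Z_i 2^i$, $y = \sum_{i=0}^n y_i 2^i$, and define
\[
T_t = \sum_{i=n-t}^{n} (X_i + Z_i - k y_i) 2^i.
\]
Since $x + z = X + Z - 2$, the identity $T_n = X + Z - ky = x + z + 2 - ky$ yields the analogue of Lemma~\ref{lemmaforf}: $y = f(x,z)$ iff $2 \leq T_n < k+2$, $y > f(x,z)$ iff $T_n < 2$, and $y < f(x,z)$ iff $T_n \geq k+2$. Because $k$ is odd, the parity argument of Lemma~\ref{lemma01} still applies: whenever $X_i \oplus y_i \oplus Z_i = 0$ for $i = n, \ldots, n-t$, each summand $X_i + Z_i - ky_i$ is even, so $T_t$ is an even multiple of $2^{n-t}$. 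The monotonicity Lemmas~\ref{lemma1}, \ref{lemma1b}, and~\ref{lemma2} transfer essentially verbatim, with only a shift in the threshold constants.

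The main obstacle is adapting the two-regime case analysis of Lemmas~\ref{lemma3} and~\ref{lemma4}. In the $k = 4m+3$ proof, the midpoint value $(2m+1) \cdot 2^{n-t}$ of the valid range $[0, k \cdot 2^{n-t})$ is an \emph{odd} multiple of $2^{n-t}$, so parity excludes it, and the ``small'' regime $[0, 2m \cdot 2^{n-t}]$ together with the ``large'' regime $[(2m+2)\cdot 2^{n-t}, k\cdot 2^{n-t})$ exhaust the valid values. For $k = 4m+1$ the analogous midpoint $2m \cdot 2^{n-t}$ is an \emph{even} multiple of $2^{n-t}$ and hence attainable by $T_t$: at that value the next-bit choice $\{u_{n-t-1}, v_{n-t-1}, z_{n-t-1}\} = \{1,0,1\}$ would push $T_{t+1}$ to $(4m+2) \cdot 2^{n-t-1}$, one step above the valid upper threshold $k \cdot 2^{n-t-1}$, whereas $\{0,0,0\}$ leaves it just below. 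Shifting to $X = x+1$, $Z = z+1$ relocates the valid window to $[2, k+2)$ but does not eliminate this midpoint obstruction. Resolving it will likely require either introducing a third ``midpoint'' regime coupled with an auxiliary look-ahead at bits $n-t-2$ and lower, or exploiting the specific form of the shifted nim-sum condition to show that the midpoint value cannot actually arise along the inductive strategy.

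Once refined analogues of Lemmas~\ref{lemma3} and~\ref{lemma4} are secured, the $\mathcal{N} \to \mathcal{P}$ and $\mathcal{P} \to \mathcal{N}$ transitions should follow by the same inductive bit-constructions as in Lemmas~\ref{fromNtoPforh} and~\ref{fromPtoNforh}, maintaining the invariant $2 \cdot 2^{n-t} \leq T_t < (k+2) \cdot 2^{n-t}$ for the reachability direction and its complementary version for the impossibility direction. The hardest step is therefore the midpoint case analysis, which is presumably why the authors could verify the statement computationally but have not yet found a closed-form proof.
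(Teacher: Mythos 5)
First, note that the statement you were asked to prove is stated in the paper only as a conjecture: the authors explicitly say they discovered it by computation with Mathematica and ``have not managed to prove it,'' so there is no proof in the paper to compare against.

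Your submission is a proof plan, not a proof, and by your own account it contains an unresolved gap at exactly the point where the $k=4m+3$ argument does its real work. The engine of Theorem \ref{theoremforoddk} is the two-regime dichotomy of Lemmas \ref{lemma3} and \ref{lemma4}: because $k=4m+3$, the quantity $S_t$ (an even multiple of $2^{n-t}$ by Lemma \ref{lemma01}) always falls either in $[0, 2m\cdot 2^{n-t}]$ or in $[(2m+2)\cdot 2^{n-t}, k\cdot 2^{n-t})$, never on the excluded odd multiple $(2m+1)\cdot 2^{n-t}$, and each regime dictates an unambiguous choice of the next bit in the constructions of Lemmas \ref{fromNtoPforh} and \ref{fromPtoNforh}. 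You correctly observe that for $k=4m+1$ the corresponding midpoint is an even multiple of $2^{n-t}$ and therefore attainable, so neither next-bit choice keeps the invariant; but you then only gesture at possible remedies (``a third midpoint regime,'' ``an auxiliary look-ahead,'' or showing ``the midpoint value cannot actually arise'') without carrying any of them out. Since every subsequent step of your plan --- the analogues of Lemmas \ref{fromNtoPforh} and \ref{fromPtoNforh} and the final induction --- depends on having a working replacement for Lemmas \ref{lemma3} and \ref{lemma4}, the argument does not go through as written. The shifted bookkeeping ($T_n = x+z+2-ky$, the window $2 \leq T_n < k+2$, and the parity of $T_t$) is a sound analogue of Lemma \ref{lemmaforf} and Lemma \ref{lemma01}, but it is preparatory; the conjecture remains unproved by this proposal.
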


\section*{Acknowledgements}
We are indebted to Shouei Takahasi and Taishi Aono. Although not the primary authors, their contributions were significant. We would like to thank Editage (www.editage.com) for English language editing. This work was
supported by Grant-in-Aid for Scientific Research of Japan.

\end{document}